\newtheorem{thm}{Theorem}
\newtheorem{question}{Question}
\newtheorem{prop}{Proposition}
\newtheorem{lemma}{Lemma}
\newtheorem{cor}{Corollary}
\theoremstyle{definition}
\newtheorem{defn}{Definition}
\theoremstyle{remark}
\newtheorem{remark}{Remark}
\newtheorem{example}{Example}
    \def\HSt{%
       \setbox0=\hbox{$\widehat{\mathit{HS}}$}
       \setbox1=\hbox{$\mathit{HS}$}
       \dimen0=1.1\ht0
       \advance\dimen0 by 1.17\ht1
       \smash{\mskip2mu\raise\dimen0\rlap{%
          \begin{turn}{180}
              {$\widehat{\phantom{\mathit{HS}}}$}
           \end{turn}} \mskip-2mu    
                \mathit{HS}
    }{\vphantom{\widehat{\mathit{HS}}}}{}}
    \def\HMt{%
       \setbox0=\hbox{$\widehat{\mathit{HM}}$}
       \setbox1=\hbox{$\mathit{HM}$}
       \dimen0=1.1\ht0
       \advance\dimen0 by 1.17\ht1
       \smash{\mskip2mu\raise\dimen0\rlap{%
          \begin{turn}{180}
              {$\widehat{\phantom{\mathit{HM}}}$}
           \end{turn}} \mskip-2mu    
                \mathit{HM}
    }{\vphantom{\widehat{\mathit{HM}}}}{}}
\newcommand{\HMf}{\widehat{\mathit{HM}}}
    \newcommand{\HSb}{\overline{\mathit{HS}}}
\newcommand{\HSf}{\widehat{\mathit{HS}}}
\newcommand{\spin}{\mathfrak{s}}
\newcommand{\ztwo}{\mathbb{F}}
\newcommand{\Pin}{\mathrm{Pin}(2)}
\newcommand{\Rin}{\mathcal{R}}
\newcommand{\V}{\mathcal{V}}
\newcommand{\Arf}{\mathrm{Arf}}
\begin{document}

\title{Manolescu correction terms and knots in the three-sphere} 

\begin{abstract}
Manolescu correction terms are numerical invariants of homology three-spheres arising from $\Pin$-equivariant Seiberg-Witten theory that contain information about homology cobordism. We discuss several constraints on these invariants for homology spheres obtained by Dehn surgery on a knot in the three-sphere (and, more generally, in an integral homology $L$-space) in terms of the surgery coefficient, the concordance order, and the genus.
\end{abstract}

\author{Francesco Lin}
\address{Department of Mathematics, Princeton University and School of Mathematics, Institute for Advanced Study} 
\email{fl4@math.princeton.edu}
\maketitle

A basic question in low-dimensional topology is the following (see Problem $4.2$ in Kirby's list \cite{Kir}): which homology three-spheres $Y$ bound a homology ball? When are two homology spheres homology cobordant? $\Pin$-equivariant Seiberg-Witten theory has recently been shown to be very powerful when addressing questions of this kind. Manolescu (\cite{Man}) has employed it to define a package of homological invariants of rational homology spheres, which he used to disproof the long standing Triangulation Conjecture. Among the key features of the theory there are the \textit{Manolescu correction terms} of a homology sphere $Y$,
\begin{equation*}
\alpha(Y)\geq \beta(Y)\geq \gamma(Y).
\end{equation*}
These are integral lifts of the Rokhlin invariant $\mu(Y)$, and are invariant under homology cobordism. Indeed, using them Manolescu proved the following statement (which is equivalent to the Triangulation Conjecture being false in high dimensions by \cite{Gal} and \cite{Mat}): in the homology cobordism group $\Theta_3^H$ there are no $2$-torsion elements with Rokhlin invariant one.
\par
The author (\cite{Lin}) has introduced the analogue of these tools in the framework Kronheimer and Mrowka's of monopole Floer homology (\cite{KM}). This approach works for every three-manifold, and the two constructions are conjectured to provide the same invariants in the case of rational homology spheres; this said, in the present paper Manolescu correction terms will denote the numerical invariants arising from the latter approach.
\\
\par
In general not much is known about the structure of the homology cobordism group other that it is not finitely generated (\cite{Fur}, \cite{FS} and more recently \cite{Sto2}). While it is known that every homology sphere is homology cobordant to a hyperbolic one (\cite{Mye}), $\Pin$-techniques have been used in \cite{Sto} and \cite{Lin2} to show that there are homology spheres not homology cobordant to Seifert fibered spaces. As Floer theoretical tools have been shown to be effective in understanding problems in Dehn surgery (see among the others \cite{KMOS}, \cite{Gre} and \cite{HL}), the following is a natural question to ask. 
\begin{question}
Is every homology sphere homology cobordant to a homology sphere obtained by surgery on a knot in $S^3$?
\end{question}
This problem is also closely related to Problem $4.25$ in Kirby's list \cite{Kir}: if $W$ is a compact four-manifold homotopy equivalent to $S^2$, is the generator of $H_2(W)$ always representable by a \textsc{pl}-embedded sphere? A way to find a couterexample would be to find a homology sphere not homology cobordant to any surgery on a knot and which bounds a homotopy $S^2$.
\\
\par
Manolescu correction terms seem a natural approach to study Question $1$, especially because the invariants involved fit in various surgery exact triangles (see \cite{Lin2}). In the present paper, we discuss several constraints on them for homology spheres obtained by Dehn surgery on the three-sphere, and in general any integral homology $L$-space, in terms of other well-studied quantities in knot theory. Recall that an $L$-space is a rational homology sphere whose reduced monopole Floer homology vanishes; any connected sum of of Poincar\'e homology spheres (with either orientation) is an integral $L$-space, and it is conjectured that these are indeed all the examples which are integral homology spheres. We will also exhibit some specific examples of spaces so that the upcoming results can be rephrased in the form \textit{``there exists a $Y$ which is not homology cobordant to any homology sphere such that etc.''}. We say that a surgery on a knot is \textit{even}, \textit{odd}, \textit{positive} or \textit{negative} if the surgery coefficient is the reciprocal of a integer with that property. Our first result involves even surgeries.
\begin{thm}\label{even}
Suppose $Y'$ is obtained by even surgery on a knot in an integral $L$-space $Y$. If the surgery is positive, we have $\alpha(Y')=\beta(Y')=\delta(Y)$ and
\begin{equation*}
\gamma(Y')=
\begin{cases}
\delta(Y')\text{ if }\delta(Y')\text{ is even}\\
\delta(Y')-1\text{ otherwise.}
\end{cases}
\end{equation*}
If the surgery is negative we have $\beta(Y')=\gamma(Y')=\delta(Y)$ and
\begin{equation*}
\alpha(Y')=
\begin{cases}
\delta(Y')\text{ if }\delta(Y')\text{ is even}\\
\delta(Y')+1\text{ otherwise.}
\end{cases}
\end{equation*}
In particular, if $Y'$ is obtained by even surgery on an integral $L$-space two of its Manolescu correction terms coincide.
\end{thm}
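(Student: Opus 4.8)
\emph{Strategy and reduction.} The plan is to compute the $\Pin$-equivariant monopole Floer module $\HSf(Y')$ --- enough of it to extract $\alpha(Y'),\beta(Y'),\gamma(Y')$ --- by feeding the (trivial) Floer homology of the integral $L$-space $Y$ into the surgery exact triangles of \cite{Lin2}, tracking the conjugation symmetry and the grading shifts; the parity of the coefficient will enter only through the latter. One first reduces to positive surgeries: if $Y'=Y_{1/(2k)}(K)$ then $-Y'=(-Y)_{-1/(2k)}(\overline{K})$, where $-Y$ is again an integral $L$-space with $\delta(-Y)=-\delta(Y)$, the coefficient $-1/(2k)$ is even of the opposite sign, and $\alpha(-Z)=-\gamma(Z)$, $\beta(-Z)=-\beta(Z)$, $\gamma(-Z)=-\alpha(Z)$; thus the negative statement is the positive one applied to $-Y$ and then reflected. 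Observe also that the last sentence of the theorem is immediate once the two displayed formulas are known: a nonzero even surgery coefficient is either positive --- whence $\alpha(Y')=\beta(Y')$ --- or negative --- whence $\beta(Y')=\gamma(Y')$. So from now on assume $k>0$ and aim to prove $\alpha(Y')=\beta(Y')=\delta(Y)$, with $\gamma(Y')$ the largest even integer $\le\delta(Y')$.

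\emph{Setting up.} Since $Y$ is an integral $L$-space, $\HSf(Y)$ is the standard $\Rin$-module supported on its three towers, with vanishing reduced part; by definition $\delta(Y)$ is the common value $\alpha(Y)=\beta(Y)=\gamma(Y)$, equal to $d(Y)/2$. Inserting this into the $\Pin$-equivariant surgery exact triangle relating $Y$, the $0$-surgery $Y_0(K)$ in its torsion $\mathrm{spin}^c$ structure, and $Y'$ --- iterated, or assembled into a mapping cone, so as to reach the coefficient $1/(2k)$ --- expresses $\HSf(Y')$ in terms of the $\Pin$-equivariant knot invariants of $K\subset Y$ alone: the $B$-type groups are each a single tower of the same shape as $\HSf(Y)$, shifted by an amount that is \emph{even} precisely because $2k$ is even, while the $A$-type groups are the Floer homologies of large surgeries, towers shifted by the $V$-invariants of $K$ together with torsion.

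\emph{Reading off the invariants.} After the infinite tails of the cone cancel, two of the three towers of $\HSf(Y')$ are carried isomorphically over from the $B$-side, hence from $\HSf(Y)$ itself, so they still read $\delta(Y)$; this yields $\alpha(Y')=\beta(Y')=\delta(Y)$. (The same conclusion can be reached, or at least cross-checked, via the $\Pin$-equivariant Fr\o yshov-type inequalities applied to the \emph{spin} cobordism underlying an even surgery, together with its reverse.) The remaining, bottom, $v$-tower is the one distorted by the $V$-invariants: its degree is already computed by the ordinary conjugation-forgetting surgery formula to be $d(Y')=2\delta(Y')$, so combining the general inequality $\gamma(Z)\le\delta(Z)$ --- valid for every rational homology sphere, since it descends from $S^1$-equivariant Floer homology --- with the even-parity constraint noted above pins $\gamma(Y')$ at the largest even integer $\le\delta(Y')$, as claimed.

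\emph{Main obstacle.} The crux is the assertion that \emph{exactly} two towers of $\HSf(Y')$ survive rigidly from the $L$-space $Y$, and that they are $\{\alpha,\beta\}$ for a positive coefficient --- hence, after the reflection above, $\{\beta,\gamma\}$ for a negative one --- in other words that $\alpha,\beta$ (resp.\ $\beta,\gamma$) are blind to even surgery while $\gamma$ (resp.\ $\alpha$) is not. Proving this needs careful control of the conjugation action on the surgery exact triangle and an argument that the spin structure available for even, but not odd, coefficients is precisely what decouples the two higher (resp.\ lower) towers from the knot data. With that in hand the remaining grading bookkeeping and the parity adjustments are routine, and the final claim --- that two of $\alpha(Y'),\beta(Y'),\gamma(Y')$ always coincide --- follows from the positive/negative dichotomy as already observed.
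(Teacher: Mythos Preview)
Your reduction to the positive case and the overall strategy---feed the trivial module of the $L$-space into the $\Pin$-surgery exact triangle and read off the towers---is exactly the paper's approach. But you have not actually carried out the argument: you yourself flag the ``main obstacle'' (that precisely the $\alpha$- and $\beta$-towers survive from $Y$) and then stop short of resolving it. The paper does resolve it, and the mechanism is one you never mention: the composite $\check{B}^s_q\circ\check{A}^s_q$ on $\HSt_\bullet$ is \emph{multiplication by $Q$} (proved by identifying the composite cobordism with $[0,1]\times Y_{1/q}(K)$ surgered along a loop). This single lemma is what drives everything. It is what tells you, inductively in $q$, exactly which classes in $\check{S}_0$ lie in the image of $\check{A}^s_q$ and hence what the quotient module $T$ in (\ref{quoT}) looks like; a short case analysis (the four shapes $(a)$--$(d)$ of $i_*\bar{S}_0$ in a degree window) then pins down $\alpha$ and $\beta$. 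Your appeal to a ``$\Pin$-equivariant mapping cone'' with $A$- and $B$-complexes is not available in the literature; the paper works entirely with iterated exact triangles, and the parity of the coefficient enters not through a grading shift on a $B$-complex but through Lemma~\ref{maps}, which says the \emph{shape} of $\bar{A}^s_q,\bar{B}^s_{q+1}$ alternates with the parity of $q$.

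Your treatment of $\gamma(Y')$ also has a gap in the opposite direction from the one you flag. From $\gamma\le\delta$ together with the Rokhlin parity you get only an \emph{upper} bound $\gamma(Y')\le\delta(Y')$ (or $\delta(Y')-1$); nothing you wrote prevents $\gamma(Y')$ from being much smaller. The paper obtains the matching lower bound again from the $\cdot Q$ lemma: it forces every element of the $\gamma$-tower in low degree to lie \emph{outside} $\mathrm{Im}\,Q$, so that the auxiliary invariant $\Delta$ of Lemma~\ref{dp1} coincides with $\gamma(Y')$, and Lemma~\ref{dp1} then gives $\gamma(Y')\in\{\delta(Y'),\delta(Y')-1\}$ according to parity. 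Finally, the spin-cobordism Fr\o yshov inequalities you invoke as a ``cross-check'' yield only $\beta(Y')=\delta(Y)$ (this is the weaker Proposition at the end of Section~\ref{exact}); they do not by themselves give $\alpha(Y')=\delta(Y)$.
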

Here $\delta(Y)$ is the correction term arising in monopole Floer homology as one half of the bottom grading in the $U$-tower of $\HMt_{\bullet}(Y)$ (hence it is $-h(Y)$ where $h(Y)$ is the Fr\o yshov invariant defined in \cite{KM}, and corresponds to $d(Y)/2$ in Heegaard Floer theory \cite{OSd}). Its value on $S^3$ is zero. As suggested by Manolescu \cite{Man}, these correction terms induce concordance invariants of a knot $K$ by taking the associated branched double cover (with respect to the unique spin structure) in the same spirit of \cite{MO}. We call these invariants $\alpha(K),\beta(K)$, $\gamma(K)$ and $\delta(K)$ respectively (in our normalization $\delta(K)$ is $1/4$ of the same named invariant in \cite{MO}). A particularly interesting class of knots to study is that of Whitehead doubles (cf. Problem $1.38$ in Kirby's list \cite{Kir}). Unfortunately, the previous result implies that this new correction terms do not carry new information in this sense.  

\begin{cor}\label{whitehead}
For a knot $K\subset S^3$, let $Wh(K)$ be the Whitehead double with a positive clasp. Then $\alpha(Wh(K))=\beta(Wh(K))=0$, and
\begin{equation*}
\gamma(Wh(K))=\begin{cases}
\delta(K)\text{ if }\delta(K)\text{ is even}\\
\delta(K)-1\text{ otherwise.}
\end{cases}
\end{equation*}
\end{cor}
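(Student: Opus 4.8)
The plan is to recognize the double branched cover of $S^3$ along $Wh(K)$ as an even positive surgery on a knot in $S^3$, and then to invoke Theorem~\ref{even} with $Y=S^3$. Denote by $Y'$ the double cover of $S^3$ branched over $Wh(K)$, taken with its unique spin structure; since $Wh(K)$ has trivial Alexander polynomial, $Y'$ is in fact an integral homology sphere. The Whitehead pattern has winding number zero in the companion solid torus, so the preimage in $Y'$ of the companion torus is a pair of parallel tori; moreover changing a single crossing in the positive clasp turns $Wh(K)$ into the unknot. Feeding this into the Montesinos trick identifies $Y'$ with $S^3_{1/2}(K\# K^r)$, where $K^r$ is the reverse of $K$ and the surgery slope $1/2$ encodes the sign of the clasp. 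This is an even positive surgery on a knot in the integral $L$-space $S^3$; I would either cite this identification to \cite{MO} or give the short Montesinos-trick derivation.

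Granting the identification, Theorem~\ref{even} applies with $Y=S^3$ and $Y'=S^3_{1/2}(K\# K^r)$. Because $\delta(S^3)=0$, the positive-surgery case of the theorem immediately yields $\alpha(Y')=\beta(Y')=0$ together with $\gamma(Y')=\delta(Y')$ when $\delta(Y')$ is even and $\gamma(Y')=\delta(Y')-1$ otherwise. Since, by construction, the concordance invariants $\alpha(Wh(K))$, $\beta(Wh(K))$, $\gamma(Wh(K))$ are precisely $\alpha(Y')$, $\beta(Y')$, $\gamma(Y')$, this already gives $\alpha(Wh(K))=\beta(Wh(K))=0$. To finish, it remains to identify $\delta(Y')$ with $\delta(K)$; for this I would combine the surgery formula for the correction term $\delta$ under $1/n$-surgery, which computes $\delta(S^3_{1/2}(K\# K^r))$ from the corresponding Fr\o yshov-type invariant of $K\# K^r$, with the comparison of that invariant against the branched-double-cover invariant of $K$ itself. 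Plugging $\delta(Y')=\delta(K)$ into the formula for $\gamma(Y')$ gives the stated case distinction.

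I expect the main difficulty to lie in this last identification rather than anywhere else. One has to fix orientation conventions carefully enough to be sure that the positive clasp lands in the \emph{positive}-surgery branch of Theorem~\ref{even} (so that $\alpha$ and $\beta$ vanish, not $\beta$ and $\gamma$), and one has to verify cleanly that the correction term of $S^3_{1/2}(K\# K^r)$ coincides with the invariant $\delta(K)$ in the normalization fixed earlier in the paper. By contrast, once the homeomorphism $Y'\cong S^3_{1/2}(K\# K^r)$ is in hand, everything else is formal bookkeeping with Theorem~\ref{even}.
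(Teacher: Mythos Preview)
Your approach matches the paper's exactly: the paper's entire proof is to cite \cite{MO} for the identification of the branched double cover of $Wh(K)$ with $S^3_{1/2}(K\# K^r)$, and then to invoke Theorem~\ref{even} with $Y=S^3$.

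One remark on your last step. By the definition of the concordance invariants given just before the corollary (they are the Manolescu and Fr{\o}yshov invariants of the branched double cover), $\delta(Y')$ is \emph{literally} $\delta(Wh(K))$, so Theorem~\ref{even} already outputs $\gamma(Wh(K))$ in terms of $\delta(Wh(K))$ with no surgery formula for $\delta$ required. The paper's proof stops there and never carries out the separate identification of $\delta(Y')$ with the branched-cover invariant $\delta(K)$ of the companion knot that you flag. In other words, the argument as written yields the displayed formula with $\delta(Wh(K))$ in place of $\delta(K)$; your instinct that a further step would be needed to reach $\delta(K)$ itself is correct, but the paper does not supply one.
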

Recall that for a homology sphere $Y'$ of the form $Y_{1/m}(K)$ for $Y$ an integral homology $L$-space the invariant $\delta(Y')$ is determined by $K$ and the sign of $m$, see for example \cite{NW}. In the case of Manolescu correction terms, also the parity of $m$ has to be taken into account. When dealing with odd surgeries, the restrictions are not as strong as in Theorem \ref{even}, but we can nevertheless prove the following.

\begin{prop}\label{odd}
Consider a homology sphere $Y'$ of the form $Y_{1/m}(K)$ with $m$ odd, for $Y$ an integral homology $L$-space. If $K$ has Arf invariant zero, the correction terms depend only on the sign of $m$. If $K$ has Arf invariant $1$, then for $m> 0$, $\alpha$ and $\beta$ of $Y_{1/m}(K)$ are independent of $m$, while for $m< 0$, $\beta$ and $\gamma$ are independent of $m$.
\end{prop}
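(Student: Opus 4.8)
The plan is to run an induction on $m$ via the surgery exact triangle, feeding in Theorem~\ref{even} at the even step, with the Arf invariant entering through the $0$-surgery. First I would reduce to $m>0$: writing $m=-m'$ with $m'>0$ gives $Y_{1/m}(K)\cong -\big((-Y)_{1/m'}(\overline{K})\big)$, where $-Y$ is again an integral homology $L$-space and the mirror $\overline{K}$ has $\Arf(\overline{K})=\Arf(K)$; since $\alpha(-N)=-\gamma(N)$, $\beta(-N)=-\beta(N)$, $\gamma(-N)=-\alpha(N)$ for a homology sphere $N$, the statements for $m<0$ are the images under this identification of the statements for $m>0$ with $\alpha$ and $\gamma$ exchanged --- which is exactly why it is $\beta,\gamma$ rather than $\alpha,\beta$ that stabilise for $m<0$ when $\Arf(K)=1$. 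So assume $m>0$ odd; the goal is that $\alpha=\beta=\gamma$ is $m$-independent when $\Arf(K)=0$, and that $\alpha,\beta$ are $m$-independent when $\Arf(K)=1$.

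The engine is the $\Pin(2)$-monopole surgery exact triangle of \cite{Lin2}. The slopes $0$, $1/(m-1)$ and $1/m$ on $\partial(Y\setminus K)$ are pairwise Farey-adjacent, so there is an exact triangle of $\Rin$-modules relating $\HSt_\bullet(Y_{1/m}(K))$, $\HSt_\bullet(Y_{1/(m-1)}(K))$ and $\HSt_\bullet(Y_0(K))$, where $Y_{1/0}(K)=Y$ when $m=1$. Two of these three are controlled independently of $m$: the $0$-surgery $Y_0(K)$ does not depend on $m$, and $m-1$ is even, so Theorem~\ref{even} computes the correction terms of $Y_{1/(m-1)}(K)$ --- two equal $\delta(Y)$, and the third is $\delta(Y_{1/(m-1)}(K))$, which by \cite{NW} depends only on the sign of $m$, rounded to the appropriate parity. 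Thus $\HSt_\bullet(Y_{1/m}(K))$ is, up to grading shift, a mapping cone of a map between two $\Rin$-modules whose correction-term data is $m$-independent within a fixed sign class.

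The Arf invariant enters through $Y_0(K)$, which has $b_1=1$ and a distinguished spin structure: the bottom gradings of the towers of $\HSt_\bullet(Y_0(K))$ in the torsion $\mathrm{spin}^{c}$ structure depend on $K$ only through $\Arf(K)$, matching the parity of the tower of $\HSt_\bullet(Y)$ when $\Arf(K)=0$ and being shifted by one when $\Arf(K)=1$ --- the $\Pin(2)$ shadow of the classical identity between $\Arf(K)$ and the Rokhlin invariant of $0$-surgery. Chasing the triangle, the correction terms of $Y_{1/m}(K)$ are squeezed between those of $Y_{1/(m-1)}(K)$ (sign-determined) and a grading shift of those of $Y_0(K)$; when $\Arf(K)=0$ all the parities align and $\alpha,\beta,\gamma$ are pinned, while when $\Arf(K)=1$ the one-step parity defect can only move the bottom correction term $\gamma$ --- consistent with the role $\gamma$ plays in Theorem~\ref{even} for positive even surgery --- leaving $\alpha,\beta$ rigid. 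To promote ``rigid'' to ``literally constant in $m$'' I would compare $Y_{1/m}(K)$ with $Y_{1/(m+2)}(K)$: both are cones of maps from an even-surgery term, with the same correction terms, into the fixed $\HSt_\bullet(Y_0(K))$, and one must check that the cobordism maps and grading shifts added in passing from $m$ to $m+2$ are isomorphisms in the grading range controlling $\alpha$ and $\beta$, exactly as in the Ni--Wu computation showing the Heegaard Floer mapping cone for $1/n$-surgery has $n$-independent $d$-invariant.

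This last $\Rin$-module comparison is where I expect the real difficulty to lie. The $\delta$-bookkeeping is dispatched by Theorem~\ref{even} and \cite{NW}; what is hard is separating $\alpha$, $\beta$ and $\gamma$, which requires controlling how the Seiberg--Witten cobordism maps of the triangle act on the nilpotent ($Q$-)part of the $\Rin$-module structure --- the data distinguishing the three invariants --- rather than merely on the $v$-tower, and also tracking the $m$-dependent grading shifts of the $2$-handle cobordisms. The cleanest organization is probably through the local-equivalence (``connected $\HSt$'') formalism: produce explicit $Q$-equivariant local maps, assembled from the cobordism maps of the two stacked triangles, between $\HSt_\bullet(Y_{1/m}(K))$ and $\HSt_\bullet(Y_{1/(m+2)}(K))$ in both directions, and conclude that neither $\alpha$ nor $\beta$ can jump. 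Should a full $\Pin(2)$ mapping-cone formula be available, one could instead read $\alpha,\beta,\gamma$ directly off the cone, making the $m$-independence manifest.
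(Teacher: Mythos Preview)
Your reduction to $m>0$ via orientation reversal and the plan to induct through the surgery triangle are exactly the framework the paper uses, but the organization and the key technical input differ.

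The paper does not compare $Y_{1/m}(K)$ with $Y_{1/(m+2)}(K)$ directly or appeal to local equivalence. Instead it introduces explicit correction terms $\alpha_\pm(K),\beta_\pm(K),\gamma_\pm(K)$ (only $\alpha_+,\beta_+,\beta_-,\gamma_-$ in the $\Arf=1$ case) attached to the zero surgery, defined from the bottom and top towers of $\check{S}_0$ relative to a fixed identification of $\bar{S}_0$. A preliminary lemma shows $\beta_-(K)=\gamma_-(K)=\delta(Y)$, pinning down the bottom towers. The inductive claim is then that for every odd $m>0$ the correction terms of $Y_{1/m}(K)$ are \emph{equal} to $\alpha_+(K),\beta_+(K),\gamma_+(K)$ (respectively $\alpha_+(K),\beta_+(K)$ when $\Arf(K)=1$); since these are invariants of $K$ alone, $m$-independence is immediate. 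The even step of the same induction recovers Theorem~\ref{even}, so you are right that the even case feeds back in, but there is no separate $m\to m+2$ comparison.

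The technical ingredient your sketch is missing, and which any version of the argument needs, is the identity $\check{B}^s_q\circ\check{A}^s_q=\cdot\, Q$ on $\check{S}_{1/q}$ (the composite cobordism is $[0,1]\times Y_{1/q}(K)$ with an $S^1\times D^3$ traded for an $S^2\times D^2$, hence induces the same map as $S^2\times S^2$ minus two balls). In ordinary monopole Floer homology this composite vanishes; here it is precisely $Q$, and this is what controls the $Q$-structure through the induction. Concretely, it forces the elements of the $\gamma$-tower of $\check{S}_{1/q}$ below degree zero to lie outside $\mathrm{Im}\,Q$, and combined with the Gysin sequence and the analogue of Lemma~\ref{usualkey} it rules out unwanted cancellations when one passes to the quotient $T=i_*(\bar{S}_0)/\check{A}^s_1(i_*\bar{S}_1)$. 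The paper then does a short case analysis (four shapes of $i_*(\bar{S}_0)$ in each four-periodic block) to identify $T$ explicitly. Your proposed local maps between $\check{S}_{1/m}$ and $\check{S}_{1/(m+2)}$ would need exactly this $Q$-identity to verify they respect the $Q$-filtration in the relevant degrees; without it, the assertion that ``only $\gamma$ can move when $\Arf(K)=1$'' remains a heuristic. One small slip: for $\Arf(K)=0$ the claim is that each of $\alpha,\beta,\gamma$ is $m$-independent, not that they coincide.
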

We expect all the correction terms for an odd surgery on an Arf invariant $1$ knot to depend only on the sign of the surgery. Unfortunately the techniques of this paper are not enough to show that, the difference between in the Arf invariant zero and one cases being in the shape of the relevant surgery exact triangle (\cite{Lin2}).
\\
\par
A class of knots which has received considerable interest is that of knots in $S^3$ with concordance order two, which forms a quite large collection (see for example \cite{HKL}). In this case, exploiting the symmetries of the invariants one can prove the following.
\begin{thm}\label{order2}
Suppose $Y$ is obtained by surgery on a knot in $S^3$ of concordance order two. Then $\delta(Y)$ and $\beta(Y)$ have the same sign, where we consider zero to have the same sign as every other integer.
\end{thm}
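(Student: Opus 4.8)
The plan is to reduce the statement to $\pm1$-surgery using the results already in hand, to exploit the homology-cobordism symmetry forced by concordance order two, and then to isolate a single inequality $\beta(S^3_{+1}(K))\le 0$ which I attack through the surgery exact triangle of \cite{Lin2}.

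\emph{Reductions.} A homology sphere obtained by surgery on a knot $K\subset S^3$ has the form $Y=S^3_{1/m}(K)$ with $m\in\mathbb{Z}\setminus\{0\}$. By the remark preceding Proposition \ref{odd}, $\delta(Y)$ depends only on $K$ and the sign of $m$. When $m$ is even, Theorem \ref{even} gives $\beta(Y)=0$, which has the same sign as $\delta(Y)$ by our convention, so there is nothing to prove. When $m$ is odd, Proposition \ref{odd} shows that $\beta(Y)$ also depends only on $K$ and the sign of $m$ (in the $\Arf(K)=1$ case $\beta$ is, for both signs, among the invariants that are pinned down). Thus it suffices to treat $Y_\pm=S^3_{\pm1}(K)$.

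\emph{Symmetry from order two.} Since $[K]$ has order two in the concordance group, $K$ is smoothly concordant to its concordance inverse, the reverse mirror $\bar K$. As Dehn surgery ignores the string orientation and $S^3_{1/m}(\bar K)=-S^3_{-1/m}(K)$, a concordance from $K$ to $\bar K$ produces an integral homology cobordism from $S^3_{+1}(K)$ to $-S^3_{-1}(K)$. All four correction terms are homology-cobordism invariants, and $\delta(-Z)=-\delta(Z)$, $\beta(-Z)=-\beta(Z)$; hence $\delta(Y_+)=-\delta(Y_-)$ and $\beta(Y_+)=-\beta(Y_-)$, and it is enough to prove that $\delta(Y_+)$ and $\beta(Y_+)$ have the same sign. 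The trace of the $(+1)$-surgery is a positive-definite cobordism from $S^3$ to $Y_+$, so the Fr\o yshov inequality gives $\delta(Y_+)\le\delta(S^3)=0$ (the monopole incarnation of $d(S^3_{+1}(K))\le 0$). We are therefore reduced to showing $\beta(Y_+)=\beta(S^3_{+1}(K))\le 0$.

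\emph{The crux, and the obstacle.} To bound $\beta(S^3_{+1}(K))$ from above I would feed the surgery triad $(S^3,\,S^3_{+1}(K),\,S^3_0(K))$ into the exact triangle for Manolescu correction terms of \cite{Lin2}. Two inputs are needed: the known shape of $\HMt_\bullet(S^3)$ together with $\beta(S^3)=0$; and control of the relevant ($b_1=1$) correction term of $S^3_0(K)$, which is where concordance order two re-enters — the concordance $K\sim\bar K$ forces $S^3_0(K)\sim_{hc}S^3_0(\bar K)=-S^3_0(K)$, pinning down (indeed, for the orientation-odd invariant, killing) that term. Chasing the $U$-towers through the triangle should then yield $\beta(S^3_{+1}(K))\le 0$, completing the proof. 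The genuine difficulty is that, as stressed after Proposition \ref{odd}, the odd-surgery exact triangle changes shape according to $\Arf(K)$: in the $\Arf(K)=1$ case the correction terms are odd, the maps in the triangle shift gradings differently, and matching these shifts carefully — rather than any formal manipulation — is the technical heart of the argument.
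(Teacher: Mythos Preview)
Your reductions are correct and match the paper exactly: even surgery is handled by Theorem~\ref{even}, odd surgery reduces to $m=\pm1$ via Proposition~\ref{odd}, the order-two hypothesis gives $\beta(S^3_{+1}(K))=-\beta(S^3_{-1}(K))$, and $\delta(S^3_{+1}(K))\le 0$. So the remaining task is indeed $\beta(S^3_{+1}(K))\le 0$.

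The gap is in your attack on that inequality. You propose to use the single triad $(S^3,S^3_{+1}(K),S^3_0(K))$ together with the symmetry $S^3_0(K)\sim_{hc} -S^3_0(K)$ to ``kill'' an orientation-odd correction term of the zero surgery. There are two problems. First, the $b_1=1$ correction terms $\alpha_\pm,\beta_\pm,\gamma_\pm$ introduced in Section~\ref{exact} are \emph{not} claimed to be homology-cobordism invariants; the Remark there shows they depend on the choice of identification~(\ref{arf0}), so the passage ``$S^3_0(K)\sim -S^3_0(K)$ hence some invariant vanishes'' is not justified. Second, orientation reversal of a $b_1=1$ manifold interchanges the top and bottom towers, so even formally the relation one would extract is of the type $\beta_+\leftrightarrow -\beta_-$, not $\beta_+\mapsto -\beta_+$; combined with Lemma~\ref{zeros} ($\beta_-=0$) your line of thought would force $\beta(S^3_{+1}(K))=0$ for every order-two knot, which is stronger than the theorem and not what one should expect.

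The paper's argument avoids this by using \emph{both} signs of surgery and never invoking cobordism invariance of the $b_1=1$ invariants. Assuming for contradiction $\beta(S^3_{+1}(K))>0$, Propositions~3 and~4 of Section~\ref{exact} give $\beta_+(K)=\beta(S^3_{+1}(K))>0$, hence $\alpha_+(K)>0$ by the $\mathcal R$-module structure. The symmetry you already derived yields $\beta(S^3_{-1}(K))<0$. Now one looks at the \emph{other} exact triangle, for $q=-1$: by Lemma~\ref{maps} the map $\check{A}^s_{-1}:\check{S}_{-1}\to\check{S}_0$ sends the bottom of the $\beta$-tower of $S^3_{-1}(K)$ injectively into the top $\alpha$-tower of $S^3_0(K)$, forcing $\alpha_+(K)<0$. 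This is the missing idea: the order-two symmetry is used not on $S^3_0(K)$ itself but to transport information from the $(+1)$-triangle to the $(-1)$-triangle, and the contradiction occurs in the single quantity $\alpha_+(K)$.
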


In general, one can exploit the wide literature of computations available in Heegaard Floer homology (via the isomorphism with usual monopole Floer homology, see \cite{HFHM1}, \cite{CGH}, and subsequent papers). This was used for example in \cite{Lin2} to discuss $(\pm1)$-surgeries on alternating knots with Arf invariant one, following the computations of \cite{OSalt}. In general, the same approach can be used to compute the correction terms of surgeries on alternating and $L$-space knots. Among the many applications one could point out, we would like to address the following consequence of the results in \cite{Gai}.
\begin{thm}\label{genus}
There exists a constant $C>0$ such that the following holds. Suppose $Y$ is a \textrm{connected sum} of homology spheres $Y_1\hash\dots\hash Y_n$, each $Y_i$ obtained from an integral $L$-space by surgery on a knot with genus at most $G$. Then $\alpha(Y)-\gamma(Y)\leq C\cdot G$.\end{thm}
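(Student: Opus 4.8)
The plan is to deduce Theorem~\ref{genus} from Gainullin's control over the Floer homology of surgeries, using that $\alpha$, $\gamma$ and $\delta$ are invariants of the $\Pin(2)$-local equivalence class of $\HSt_{\bullet}(Y)$ and that passing to this class is a homomorphism from $\Theta_3^H$ to the $\Pin(2)$-local equivalence group.

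I would first record a reduction. From $\alpha(Y)-\gamma(Y)=\bigl(\alpha(Y)-\delta(Y)\bigr)+\bigl(\delta(Y)-\gamma(Y)\bigr)$ and the duality relations $\gamma(Y)=-\alpha(-Y)$, $\delta(Y)=-\delta(-Y)$ one gets $\delta(Y)-\gamma(Y)=\alpha(-Y)-\delta(-Y)$; and since $-Y=(-Y_1)\hash\dots\hash(-Y_n)$ with each $-Y_i$ again a surgery on a knot of genus $\le G$ in an integral homology $L$-space (orientation reversal preserves this class of ambient manifolds, mirroring preserves the genus, and $1/m\mapsto 1/(-m)$ preserves the parity of the integer), it suffices to bound $\alpha(Y)-\delta(Y)\le C'G$, with $C'$ independent of $n$, for every connected sum $Y$ of the stated form; Theorem~\ref{genus} then follows with $C=2C'$.

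The single-summand input is \cite{Gai}. For $Y_i=L_{1/m_i}(K_i)$ with $g(K_i)\le G$, the mapping cone formula shows that the reduced part of $HF^{+}$ of a $1/m$-surgery is governed by that of a large surgery, which is concentrated in $2g(K_i)+1$ of its $\mathrm{Spin}^{c}$ structures and hence in a band of gradings of length $O(g(K_i))$. Transporting this to the monopole Floer groups $\HMt_{\bullet}$ via \cite{HFHM1}, \cite{CGH} and feeding it into the surgery exact triangles of \cite{Lin2} --- the ones already used to prove Theorem~\ref{even} and Proposition~\ref{odd} --- one concludes that each $Y_i$ is $\Pin(2)$-locally equivalent to a homology sphere whose reduced $\HSt_{\bullet}$ is supported in a grading interval of length at most $C_1G$; in particular $\alpha(Y_i)-\delta(Y_i)\le C_1G$ and $\delta(Y_i)-\gamma(Y_i)\le C_1G$.

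The crux is the passage to the connected sum. The general tools --- subadditivity $\alpha(Y_1\hash Y_2)\le\alpha(Y_1)+\alpha(Y_2)$ and additivity of $\delta$ --- give only $\alpha(Y)-\delta(Y)\le\sum_i\bigl(\alpha(Y_i)-\delta(Y_i)\bigr)=O(nG)$, which is useless here; what must be shown is that on the sub-monoid of the $\Pin(2)$-local equivalence group generated by genus-$\le G$ surgery classes the quantity $\alpha-\delta$ stays $\le C_1G$ under connected sum. Concretely, the top $V$-tower of $\HSt_{\bullet}(Y_1\hash Y_2)$ produced by the K\"unneth formula does not sit at the sum of the levels of the two factors' towers but is pulled back toward the $\delta$-level by the cross-terms, so that the defects $\alpha(Y_i)-\delta(Y_i)$ are absorbed rather than added; isolating the rigid, essentially monotone shape that the local equivalence class of a knot surgery has --- and checking that it survives the passage from the mapping cone formula through the surgery triangles --- is exactly where the hypothesis that the $Y_i$ are surgeries, rather than arbitrary homology spheres, is really used. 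This is the main obstacle: no such bound holds for connected sums of general homology spheres, and for the manifolds at hand the subadditivity of $\alpha$ and superadditivity of $\gamma$ are very far from sharp.
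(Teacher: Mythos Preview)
Your proposal has a genuine gap: you correctly identify that the connected sum step is the crux, and you correctly observe that subadditivity of $\alpha$ and additivity of $\delta$ only yield an $O(nG)$ bound, but you do not actually close the gap. The final paragraph describes a phenomenon you would need to establish (``the defects $\alpha(Y_i)-\delta(Y_i)$ are absorbed rather than added'') without giving any argument for it; appeals to ``the rigid, essentially monotone shape that the local equivalence class of a knot surgery has'' are not a proof, and indeed there is no general structure theorem of that kind available in $\Pin(2)$-local equivalence that would make this step go through.

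The paper's proof sidesteps the $\Pin(2)$-local equivalence group entirely. The key idea is to work not with $\HSt_{\bullet}$ but with the \emph{ordinary} monopole Floer homology $\HMt_{\bullet}$, and to introduce the invariant $t(Y)$, the maximal $U$-torsion exponent appearing in the reduced part of $\HMt_{\bullet}(Y)$. The K\"unneth/$\mathrm{Tor}$ formula over $\ztwo[[U]]$ gives the exact identity $t(Y_1\hash Y_2)=\max\{t(Y_1),t(Y_2)\}$---the maximum, not the sum---so the bound is automatically uniform in $n$. One then shows via the Gysin exact sequence that $\alpha(Y)-\gamma(Y)\le C' t(Y)$ for any homology sphere, and Gainullin's result gives $t(Y_i)\le 2g(K_i)\le 2G$ for each summand. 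Your approach of tracking the $\Pin(2)$-equivariant data through the connected sum is much harder precisely because the analogous ``max, not sum'' behaviour is not available there; dropping to the $U$-torsion of $\HMt_{\bullet}$ is what makes the argument work.
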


\vspace{0.3cm}
\textit{Organization of the paper. }In Section \ref{review} we briefly recall the basic definitions and formal properties of $\Pin$-monopole Floer homology that we will need. Section \ref{tors} is dedicated to some simple consequences of the Gysin exact sequence and the role of $U$-torsion. Section \ref{exact} studies in detail the properties of the surgery exact triangle for a knot in an integral $L$-space. Finally, in Section \ref{examples} we discuss some explicit examples.
\\
\par
\textit{Acknowledgements. }The present paper was inspired by some nice conversations with Matthew Hedden and Mark Powell regarding Corollary \ref{whitehead}, whom I thank together with the organizers of the workshop \textit{Synchronizing Smooth and Topological 4-Manifolds} at BIRS, which made those conversations possible. The author is grateful to Paolo Aceto for suggesting Figure \ref{composition}, and to Jen Hom for the nice conversations regarding Question $1$. This work was partially supported by the NSF grant DMS-0805841, the Shing-Shen Chern Membership Fund and the IAS Fund for Math.

\vspace{0.5cm}
\section{A quick review of $\Pin$-monopole Floer homology}\label{review}
Most of the results of the present paper follow from some formal properties of $\Pin$-monopole Floer homology, which we now review. We refer the reader to \cite{Lin} for a more thorough introduction and \cite{Lin3} for the actual construction. In what follows, $\ztwo$ denotes the field with two elements.
\par
Given a closed connected oriented three-manifold $Y$, there is a natural involution by conjugation $\jmath$ on the set of spin$^c$ structures $\mathrm{Spin}^c(Y)$. The fixed points, which we call \textit{self-conjugate}, corresponds to spin$^c$ structures induced by a spin structure. To each self-conjugate spin$^c$ structure $\spin$ we associate the $\Pin$-\textit{monopole Floer homology groups}, which in the long exact sequence
\begin{equation}\label{pin2}
\cdots\stackrel{i_*}{\longrightarrow}\HSt_{\bullet}(Y,\spin)\stackrel{j_*}{\longrightarrow} \HSf_{\bullet}(Y,\spin)\stackrel{p_*}{\longrightarrow} \HSb_{\bullet}(Y,\spin)\stackrel{i_*}{\longrightarrow}\cdots
\end{equation}
and are called respectively \textit{HS-to}, \textit{HS-from} and \textit{HS-bar}. These groups carry a relative $\mathbb{Z}$-grading which can be lifted to an absolute $\mathbb{Q}$-grading. They also carry a structure of graded module over the ring
\begin{equation*}
\Rin= \ztwo[[V]][Q]/(Q^3)
\end{equation*}
where the actions of $V$ and $Q$ have degree respectively $-4$ and $-1$. Furthermore, there are analogous cohomological version which satisfy a version of Poincar\'e Lefschetz duality: for example, up to grading shift,
\begin{equation*}
\HSt_{\bullet}(Y,\spin)\equiv\HSf_{\bullet}(\bar{Y},\spin),
\end{equation*}
where $\bar{Y}$ denotes the manifold with orientation reversed.
\\
\par
For any rational number $d$ let $\V_d$ and  $\V^+_d$ be the graded $\ztwo [[V]]$-modules $\ztwo[V^{-1},V]]$ (the ring of Laurent power series) and $\ztwo[V^{-1},V]]/V\ztwo [[V]]$ where the grading is shifted so that the element $1$ has degree $d$. We have the identifications as absolutely graded $\Rin$-modules:
\begin{align*}
\HSt_{\bullet}(S^3)&\cong \V^+_2\oplus \V^+_1\oplus \V^+_0\\
\HSf_{\bullet}(S^3)&\cong \Rin\langle-1\rangle\\\
\HSb_{\bullet}(S^3)&\cong \V_2\oplus \V_1\oplus \V_0.
\end{align*}
The action of $Q$ on the \textit{to} and \textit{bar} groups is an isomorphism from the first tower onto the second tower and from the second tower onto the third (and zero otherwise). More generally, given a rational homology sphere $Y$ and a self-conjugate spin$^c$ structure $\spin$ we have an isomorphism of $\Rin$-modules
\begin{equation*}
\HSb_{\bullet}(Y,\spin)\cong \HSb_{\bullet}(S^3)
\end{equation*}
up to grading shift. The group $\HSt_{\bullet}(Y,\spin)$ vanishes in degrees low enough, and the map $i_*$ is an isomorphism in degrees high enough. Hence $i_*\left(\HSb_{\bullet}(Y,\spin)\right)$, considered as an $\ztwo[[V]]$-module, decomposes as the direct sum
\begin{equation*}
\V^+_c\oplus \V^+_b\oplus \V^+_a.
\end{equation*}
We call these three summands respectively the $\gamma$, $\beta$ and $\alpha$-towers. The action of $Q$ sends the $\gamma$-tower onto the $\beta$-tower and the $\beta$-tower onto the $\alpha$-tower. Manolescu's correction terms are then defined to be numbers
\begin{equation*}
\alpha(Y)\geq\beta(Y)\geq\gamma(Y)
\end{equation*}
such that
\begin{equation*}
a=2\alpha(Y),\quad b= 2\beta(Y)+1, \quad c=2\gamma(Y)+2.
\end{equation*}
The inequalities between these quantities follow from the module structure. The invariants satisfy the following properties:
\begin{enumerate}
\item they reduce to $-\mu(Y,\spin)$ modulo two, where the latter is the Rokhlin invariant;
\item when $Y$ is an integral homology sphere, they are integers;
\item they are invariant under spin rational homology cobordism;
\item $\alpha(\bar{Y})=-\gamma(Y)$, $\beta(\bar{Y})=-\beta(Y)$ and $\gamma(\bar{Y})=-\alpha(Y)$, where $\bar{Y}$ is $Y$ with the orientation reversed.
\end{enumerate}
These are the quantities we are interested in studying in this paper.
\\
\par
If $[\spin]\in \mathrm{Spin}^c(Y)/\jmath$ is the orbit of a pair of non isomorphic spin$^c$ structures $\spin\neq\bar{\spin}$, we can define
\begin{equation*}
\HSt_{\bullet}(Y,[\spin]):=\HMt_{\bullet}(Y,\spin)\equiv\HMt_{\bullet}(Y,\bar{\spin}).
\end{equation*}
This module is only relatively graded, and can be given the structure of $\Rin$-module where $Q$ acts by zero and $V$ acts by $U^2$. We define the total Floer group
\begin{equation*}
\HSt_{\bullet}(Y)=\bigoplus_{[\spin]\in\mathrm{Spin}^c(Y)/\jmath} \HSt_{\bullet}(Y,[\spin])
\end{equation*}
A cobordism $W$ from $Y_0$ to $Y_1$ induces a map of $\Rin$-modules
\begin{equation*}
\HSt_{\bullet}(W): \HSt_{\bullet}(Y_0)\rightarrow \HSt_{\bullet}(Y_1).
\end{equation*}
This maps decomposes along according to pairs of conjugate spin$^c$ structures on the cobordism, and furthermore this assignment is functorial under compositions.

\vspace{0.5cm}
\section{The Gysin exact sequence and $U$-torsion}\label{tors}
A very important feature  of the theory is the Gysin exact sequence
\begin{equation*}
\cdots \stackrel{\pi_*}{\longrightarrow} \HSt_{\bullet}(Y)\stackrel{\cdot Q}{\longrightarrow}\HSt_{\bullet}(Y)\stackrel{\iota_*}{\longrightarrow} \HMt_{\bullet}(Y)\stackrel{\pi_*}{\longrightarrow} \HSt_{\bullet}(Y)\stackrel{\cdot Q}{\longrightarrow} \cdots,
\end{equation*}
see Chapter $4$ of \cite{Lin}. This is an exact sequence of $\Rin$-modules where on $\HMt_{\bullet}$ we have that $V$ acts as $U^2$ and $Q$ acts as zero. Furthermore this sequence is compatible with the maps induced by cobordisms. This can be used to infer information about $\HSt_{\bullet}$ from the knowledge of $\HMt_{\bullet}$. In the case $Y$ is a homology sphere, in degrees high enough it is four periodic, and the basic block it looks like
\begin{center}
\begin{tikzpicture}
\matrix (m) [matrix of math nodes,row sep=1.5em,column sep=1em,minimum width=2em]
  {\cdot &\cdot &\cdot\\
  \ztwo &\ztwo& \ztwo\\
  \ztwo&0&\ztwo\\
  \ztwo &\ztwo&\ztwo\\};
  \path[-stealth]
  (m-2-1) edge node [above]{} (m-2-2)
      (m-4-2) edge node [above]{} (m-4-3)
       (m-2-3) edge node [above]{} (m-3-1)
        (m-3-3) edge node [above]{} (m-4-1)
  ;
\end{tikzpicture}
\end{center}
where the central column represents $\HMt_{\bullet}$ while the other two represent $\HSt_{\bullet}$. Among the other things, in \cite{Lin2} we proved the following.
\begin{lemma}
An integral homology sphere $Y$ is an $L$-space, i.e. $\HMt_{\bullet}(Y)\equiv \HMt_{\bullet}(S^3)$ as $\ztwo[[U]]$-graded modules, up to grading shift, if and only if $\HSt_{\bullet}(Y)\equiv \HSt_{\bullet}(S^3)$ as $\Rin$-graded modules, up to grading shift. In particular for an integral $L$-space we have $\alpha(Y)=\beta(Y)=\gamma(Y)=\delta(Y)$.
\end{lemma}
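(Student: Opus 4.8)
The plan is to deduce the entire statement from the single numerical identity
\begin{equation*}
\dim_{\ztwo}\HMt^{\mathrm{red}}_{\bullet}(Y)=2\,\dim_{\ztwo}\ker\bigl(Q\colon \HSt^{\mathrm{red}}_{\bullet}(Y)\to \HSt^{\mathrm{red}}_{\bullet}(Y)\bigr),
\end{equation*}
where $\HMt^{\mathrm{red}}_{\bullet}(Y)$ and $\HSt^{\mathrm{red}}_{\bullet}(Y)$ denote the (finite dimensional) reduced parts, i.e. the quotients of $\HMt_{\bullet}(Y)$ and $\HSt_{\bullet}(Y)$ by their $V$-divisible submodules, together with two reformulations. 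First, $Y$ is an integral $L$-space precisely when $\HMt^{\mathrm{red}}_{\bullet}(Y)=0$; this is just the structure of $\HMt_{\bullet}(Y)$ as $\mathcal{T}^+$ plus a finite reduced part. Second, $\HSt_{\bullet}(Y)\equiv\HSt_{\bullet}(S^3)$ up to grading shift precisely when $\HSt^{\mathrm{red}}_{\bullet}(Y)=0$: by the structural results recalled in Section \ref{review} the $V$-divisible part of $\HSt_{\bullet}(Y)$ is exactly $i_*\bigl(\HSb_{\bullet}(Y)\bigr)=\V^+_c\oplus\V^+_b\oplus\V^+_a$ with its standard $\Rin$-module structure, and a short parity count modulo $4$ (using that $Q$ has degree $-1$) shows that any nonzero $\HSt^{\mathrm{red}}_{\bullet}(Y)$ would sit in the residue class not occupied by these three towers, so $\HSt^{\mathrm{red}}_{\bullet}(Y)=0$ if and only if $\HSt_{\bullet}(Y)$ is, up to shift, the $S^3$ module. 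Granting the identity and the reformulations, the lemma becomes the statement ``$\HMt^{\mathrm{red}}_{\bullet}(Y)=0\iff \HSt^{\mathrm{red}}_{\bullet}(Y)=0$''.

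To establish the identity I would restrict the Gysin exact sequence to the $V$-divisible parts. Since $Q$, $\iota_*$ and $\pi_*$ are $\ztwo[[V]]$-linear they carry $V$-divisible submodules into $V$-divisible submodules, so the Gysin sequence of $Y$ contains, as a subsequence, the Gysin sequence of the divisible parts; by the structural results of Section \ref{review} this subsequence is isomorphic to the Gysin sequence of $S^3$, in particular it is exact. Dividing the exact Gysin sequence of $Y$ by this exact subcomplex leaves an exact sequence
\begin{equation*}
\cdots \stackrel{\pi_*}{\longrightarrow} \HSt^{\mathrm{red}}_{\bullet}(Y)\stackrel{\cdot Q}{\longrightarrow}\HSt^{\mathrm{red}}_{\bullet}(Y)\stackrel{\iota_*}{\longrightarrow} \HMt^{\mathrm{red}}_{\bullet}(Y)\stackrel{\pi_*}{\longrightarrow} \HSt^{\mathrm{red}}_{\bullet}(Y)\stackrel{\cdot Q}{\longrightarrow} \cdots
\end{equation*}
of finite dimensional $\ztwo$-vector spaces, and reading dimensions along this three-periodic sequence gives $\dim\HMt^{\mathrm{red}}_{\bullet}(Y)=\dim\operatorname{coker}(Q)+\dim\ker(Q)$ on $\HSt^{\mathrm{red}}_{\bullet}(Y)$, which equals $2\dim\ker(Q)$ since $\dim\ker=\dim\operatorname{coker}$ for an endomorphism of a finite dimensional space.

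The conclusion is then immediate. Because $Q^3=0$ in $\Rin$, the operator $Q$ on $\HSt^{\mathrm{red}}_{\bullet}(Y)$ is nilpotent, hence injective if and only if $\HSt^{\mathrm{red}}_{\bullet}(Y)=0$; combining this with the identity, $\HMt^{\mathrm{red}}_{\bullet}(Y)=0$ iff $\ker(Q)=0$ iff $\HSt^{\mathrm{red}}_{\bullet}(Y)=0$, which by the two reformulations is the asserted equivalence. For the final clause: when both reduced parts vanish the surviving Gysin sequence is precisely that of $S^3$, and matching the bottom gradings of the three towers of $\HSt_{\bullet}(Y)$ against those of $S^3$ yields $\alpha(Y)=\beta(Y)=\gamma(Y)$, while matching the cokernel of $Q$ (the $V$-subtower of $\HMt_{\bullet}(Y)$, whose bottom degree is $2\delta(Y)$) pins this common value down to $\delta(Y)$.

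The step I expect to need the most care is the exactness of the subsequence of $V$-divisible parts: it is a subcomplex of an exact sequence, and that alone does not make it exact. I would settle it by identifying this subsequence with the Gysin sequence of $S^3$ --- all three divisible parts carry the standard module structure, and the restricted $\iota_*$ and $\pi_*$ are then forced as the only $\ztwo[[V]]$-linear maps of the correct degree between these particular modules. An alternative that sidesteps this identification is to run a truncated Euler-characteristic count directly in the full Gysin sequence over a band of gradings large enough to contain both reduced parts; it extracts the same identity at the price of slightly more bookkeeping. The remaining ingredients --- the parity count in the reformulation and the nilpotence argument --- are routine.
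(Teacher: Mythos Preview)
The paper does not prove this lemma here; it is quoted from \cite{Lin2}. So there is no in-paper argument to compare against, and I assess your proposal on its own terms.

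Your approach has a genuine gap at precisely the step you flagged. The restriction of the Gysin sequence to the $V$-divisible parts is \emph{not} exact in general, and your proposed fix---identifying it with the Gysin sequence of $S^3$---does not work. The divisible part of $\HSt_{\bullet}(Y)$ is indeed $i_*\bigl(\HSb_{\bullet}(Y)\bigr)=\V^+_c\oplus\V^+_b\oplus\V^+_a$, but with $a=2\alpha(Y)$, $b=2\beta(Y)+1$, $c=2\gamma(Y)+2$; unless $\alpha=\beta=\gamma$ these three towers do \emph{not} sit in the $S^3$ relative positions, so there is no identification to make. Concretely, whenever $\alpha(Y)>\beta(Y)$ (for instance for the manifolds $Y_k$ of Section~\ref{examples}) the bottom of the $\beta$-tower lies in the odd degree $2\beta+1$ and is annihilated by $Q$ restricted to the divisible part, yet it cannot be in the image of $\pi_*$ restricted to the $U$-tower of $\HMt_{\bullet}(Y)$, which is concentrated in even degrees. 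So the divisible sub-Gysin fails exactness there.

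In fact your numerical identity $\dim_{\ztwo}\HMt^{\mathrm{red}}_{\bullet}(Y)=2\dim_{\ztwo}\ker\bigl(Q\lvert_{\HSt^{\mathrm{red}}}\bigr)$ is false as stated: the left-hand side can be odd (it equals $1$ for the Brieskorn sphere $\Sigma(2,3,7)$). Your alternative Euler-characteristic count would, as you say, extract the same identity, so it does not rescue the argument either. Your second reformulation also needs more than the ``parity count'' you sketch: knowing $\HSt^{\mathrm{red}}_{\bullet}(Y)=0$ tells you $\HSt_{\bullet}(Y)$ equals the three towers, but you must still prove $\alpha=\beta=\gamma$ before you can match $\HSt_{\bullet}(S^3)$ up to a \emph{single} overall shift; nothing forces the reduced part into the unoccupied residue class modulo four.

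The underlying idea---read the lemma off the Gysin sequence---is the right one; what is needed is a direct degree-by-degree analysis rather than a passage to quotients. For instance, in the forward direction, once $\HMt^{\mathrm{red}}_{\bullet}(Y)=0$ the vanishing of $\HMt_d$ for $d<2\delta(Y)$ forces $Q\colon\HSt_d\to\HSt_{d-1}$ to be an isomorphism in that range, so $\HSt_d=0$ for $d<2\delta(Y)$; one then walks up degree by degree, feeding in the known four-periodic behaviour in high degrees, to pin down each $\HSt_d$.
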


The following result appears in \cite{Sto3} and as the proof only appeals to formal properties of the Gysin exact sequence, it readily adapts to our setting. We quickly review it for completeness.
\begin{lemma}\label{dp1}
For a homology sphere $Y$, consider the quantity
\begin{equation*}
\Delta=\frac{1}{2}\left(\mathrm{min}\{\mathrm{gr}(x)\lvert x\in \gamma\textrm{-tower}, x\not\in\mathrm{Im}Q\}-2\right).
\end{equation*}
Then we have
\begin{equation*}
\Delta(Y)=\begin{cases}
\delta(Y)\text{ if }\delta(Y)\equiv\mu(Y)(2)\\
\delta(Y)-1\text{ otherwise.}
\end{cases}
\end{equation*}
\end{lemma}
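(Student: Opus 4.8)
\textit{Proof proposal.} The plan is to use exactness of the Gysin sequence to convert the condition ``$x\notin\mathrm{Im}\,Q$'' into ``$\iota_*(x)\neq 0$'', and then to compute $\iota_*$ along the $\gamma$-tower by bootstrapping from its behaviour in high gradings via $V$-linearity.

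First I would record that, by exactness of the Gysin sequence, the image of $Q\colon\HSt_{\bullet}(Y)\to\HSt_{\bullet}(Y)$ coincides with the kernel of $\iota_*\colon\HSt_{\bullet}(Y)\to\HMt_{\bullet}(Y)$, so an element of the $\gamma$-tower fails to lie in $\mathrm{Im}\,Q$ precisely when $\iota_*$ does not kill it; hence
\[
\Delta(Y)=\frac{1}{2}\Bigl(\min\bigl\{\mathrm{gr}(x)\mid x\in\gamma\textrm{-tower},\ \iota_*(x)\neq 0\bigr\}-2\Bigr).
\]
Write $c=2\gamma(Y)+2$ for the bottom grading of the $\gamma$-tower $\cong\V^+_c$, and let $e_k$ denote its element of grading $c+4k$, so $Ve_{k+1}=e_k$ and $Ve_0=0$. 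In the four-periodic block describing $\HSt_{\bullet}$ and $\HMt_{\bullet}$ in high gradings, the $\gamma$-tower is the tower that $Q$ carries isomorphically onto the $\beta$-tower, and in that same grading $\iota_*$ is an isomorphism onto $\HMt_{\bullet}$; thus for $m\gg 0$ the element $\iota_*(e_{k+m})$ is the element of the $U$-tower of $\HMt_{\bullet}(Y)$ in grading $c+4(k+m)$, whose bottom grading is $2\delta(Y)$. Since $\iota_*$ is $\Rin$-linear and $V$ acts on $\HMt_{\bullet}$ as $U^2$, and $e_k=V^m e_{k+m}$, applying $V^m=U^{2m}$ gives $\iota_*(e_k)=U^{2m}\iota_*(e_{k+m})$, which equals the $U$-tower element of grading $c+4k$ when $c+4k\geq 2\delta(Y)$ and is $0$ otherwise. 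In particular $\iota_*(e_k)\neq 0$ if and only if $c+4k\geq 2\delta(Y)$.

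It then remains to identify $g_0:=\min\{c+4k\mid c+4k\geq 2\delta(Y)\}$, and in particular its residue. Writing $g_0=c+4k_0$, the element $\iota_*(e_{k_0})$ is a non-zero element of the $U$-tower with $V\cdot\iota_*(e_{k_0})=\iota_*(Ve_{k_0})=\iota_*(e_{k_0-1})=0$, where $e_{-1}:=0$ and the last equality is the minimality of $k_0$; hence $\iota_*(e_{k_0})$ lies in $\ker(U^2)$ inside that tower, which forces its grading to be $2\delta(Y)$ or $2\delta(Y)+2$, so $g_0\in\{2\delta(Y),\,2\delta(Y)+2\}$ (in particular $c\leq 2\delta(Y)+2$, i.e.\ $\gamma(Y)\leq\delta(Y)$). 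Now $g_0\equiv c=2\gamma(Y)+2\pmod 4$, and $\gamma(Y)\equiv-\mu(Y)\equiv\mu(Y)\pmod 2$, so $c\equiv 2\mu(Y)+2\pmod 4$. If $\delta(Y)\equiv\mu(Y)\pmod 2$ then $2\delta(Y)\equiv 2\mu(Y)\not\equiv c\pmod 4$, so $g_0=2\delta(Y)+2$ and $\Delta(Y)=\delta(Y)$; if $\delta(Y)\not\equiv\mu(Y)\pmod 2$ then $2\delta(Y)\equiv 2\mu(Y)+2\equiv c\pmod 4$, so $g_0=2\delta(Y)$ and $\Delta(Y)=\delta(Y)-1$. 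This is exactly the claimed dichotomy.

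The step that needs the most care is the identification, read off from the four-periodic block, that $\iota_*$ restricts in high gradings to an isomorphism from the \emph{$\gamma$-tower} onto the $U$-tower of $\HMt_{\bullet}$ rather than from one of the other two towers; this is dictated by the way the $\Rin$-module structure (concretely, the $Q$-action) permutes the three towers, and once it is secured the remainder is the $V$-linear bootstrap and the elementary parity count above. I do not expect a genuinely hard point beyond this bookkeeping, since neither the ``bar'' flavours nor the reduced parts of $\HSt_{\bullet}$ or $\HMt_{\bullet}$ ever enter the argument: everything happens inside the towers, where the Gysin sequence is completely explicit.
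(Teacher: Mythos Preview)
Your proof is correct and follows essentially the same route as the paper's: convert ``not in $\mathrm{Im}\,Q$'' to ``$\iota_*\neq 0$'' via Gysin exactness, identify $\iota_*$ on the $\gamma$-tower with the $U$-tower using the standard high-degree form (the paper phrases this as compatibility with the \textit{bar} sequence), and finish with the parity argument. The only cosmetic difference is that the paper organizes the middle step as the pair of bounds $\delta-1\le\Delta\le\delta$ rather than your explicit $V$-linear bootstrap computing $\iota_*(e_k)$ for all $k$.
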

\begin{proof}
Suppose $x$ is in the $\gamma$-tower and not in the image of $Q$. The Gysin exact sequence implies that $\iota_*x\neq 0$, and indeed the compatibility with the sequence for the \textit{bar}-versions of the invariants implies that $\iota_*x$ is in the $U$-tower of $\HMt_{\bullet}(Y)$. Hence $\Delta(Y)\leq \delta(Y)$. Analogously, if $y$ in in the $U$-tower in degree congruent to $2+2\mu(Y)$ modulo $4$, then the fact that in degrees high enough the sequence is standard implies that $y=\iota_*x$ for $x$ in the $\gamma$-tower. The exactness implies that $x$ is not in the image of $Q$, so that $\Delta(Y)\geq \delta(Y)-1$. The result then follows because $\Delta$ has the same parity as $\mu(Y)$.\end{proof}

We now discuss some simple bounds on the invariants in terms of the $U$-torsion of $\HMt_{\bullet}(Y)$. Recall that for a homology sphere $Y$ we have the isomorphism of $\ztwo[U]$-modules
\begin{equation*}
\HMt_{\bullet}(Y)=\left(\ztwo[U^{-1},U]]/U\ztwo[U]\right)\bigoplus\left(\oplus\ztwo[U]/U^{n_i}\right)
\end{equation*}
for some collection of $n_i\geq 1$. Define $t(Y)$ to be the maximum of the $n_i$. This is readily seen to be an invariant of the homology sphere. We have the following properties:
\begin{enumerate}
\item $t(\bar{Y})=t(Y)$;
\item $t(Y\hash Y')=\mathrm{max}\{t(Y),t(Y')\}$.
\end{enumerate}
The first property follows from Poincar\'e duality, while the second property is a consequence of the isomorphism\begin{equation*}
\HMf_{\bullet}(Y\hash Y')=\mathrm{Tor}^{\ztwo[[U]]}_{*,*}(\HMf_{\bullet}(Y),\HMf_{\bullet}(Y'))[1],
\end{equation*} 
(see \cite{Lin4} for discussion of the unpublished work of Bloom, Mrowka and Ozsv\'ath, and \cite{OS2} for the analogous result in Heegaard Floer homology) and the fact that, forgetting about the gradings,
\begin{align*}
\mathrm{Tor}^{\ztwo[[U]]}_{*,*}(\ztwo[U]/U^{n},\ztwo[U]/U^{m})&=\ztwo[U]/U^{n}\oplus \ztwo[U]/U^{n}\text{ for }n\leq m\\
\mathrm{Tor}^{\ztwo[[U]]}_{*,*}(\ztwo[[U]],\ztwo[U]/U^{n})&=\ztwo[U]/U^{n}.
\end{align*}

\begin{lemma}
There exists a constant $C'>0$ such that for every homology sphere $\alpha(Y)-\gamma(Y)\leq C't(Y)$.
\end{lemma}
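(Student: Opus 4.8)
The plan is to control both $\alpha(Y)-\delta(Y)$ and $\delta(Y)-\gamma(Y)$ separately in terms of $t(Y)$, using the Gysin exact sequence relating $\HSt_{\bullet}(Y)$ and $\HMt_{\bullet}(Y)$. The key observation is that, in degrees high enough, the Gysin sequence is four-periodic and "standard" (the basic block displayed above), and what obstructs it from being standard all the way down is precisely the presence of $U$-torsion in $\HMt_{\bullet}(Y)$ together with $Q$-torsion phenomena in $\HSt_{\bullet}(Y)$. So the first step is to make the heuristic "$\alpha$, $\beta$, $\gamma$ are all within a bounded multiple of $t(Y)$ of $\delta(Y)$" precise.

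First I would show $\delta(Y)-\gamma(Y)\leq C_1 t(Y)$ for a universal $C_1$. By Lemma \ref{dp1}, $\gamma(Y)$ differs from $\Delta(Y)$ by at most $1$, and $\Delta(Y)$ is the grading (halved, shifted) of the lowest element of the $\gamma$-tower not in $\mathrm{Im}\,Q$. By the argument in the proof of Lemma \ref{dp1}, $\iota_*$ carries such an element into the $U$-tower of $\HMt_{\bullet}(Y)$, which places $\Delta(Y)\leq\delta(Y)$; for the reverse direction one needs to bound how far below the top of the tower the Gysin sequence can fail to be standard, and this is governed by the lengths $n_i$ of the $U$-torsion summands, hence by $t(Y)$. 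Concretely, below grading roughly $2\delta(Y)-4t(Y)$ the module $\HMt_{\bullet}(Y)$ agrees with $\ztwo[U^{-1},U]]/U\ztwo[U]$ shifted, so the Gysin sequence there is forced into its standard four-periodic form and the $\gamma$-tower must be $Q$-surjected onto from above in that range; this yields $\gamma(Y)\geq\delta(Y)-C_1 t(Y)$ after tracking the grading conventions $a=2\alpha$, $b=2\beta+1$, $c=2\gamma+2$.

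Next, I would bound $\alpha(Y)-\delta(Y)$. Here I would apply the previous bound to $\bar Y$: by property (4) of the correction terms, $\alpha(Y)=-\gamma(\bar Y)$ and $\delta(Y)=-\delta(\bar Y)$ (the Fr\o yshov invariant changes sign under orientation reversal), so $\alpha(Y)-\delta(Y)=\delta(\bar Y)-\gamma(\bar Y)\leq C_1 t(\bar Y)=C_1 t(Y)$, using property (1) of $t$, namely $t(\bar Y)=t(Y)$. Combining, $\alpha(Y)-\gamma(Y)=(\alpha(Y)-\delta(Y))+(\delta(Y)-\gamma(Y))\leq 2C_1 t(Y)$, so $C'=2C_1$ works.

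The main obstacle is the second step of the first paragraph above: turning "the Gysin sequence is standard far below the top of the tower" into an explicit linear bound. One must verify that the only way an element of the $\gamma$-tower fails to lie in $\mathrm{Im}\,Q$ is due to a $U$-torsion class in $\HMt_{\bullet}(Y)$ "blocking" the map $\iota_*$ or $\pi_*$ at that grading, and that each such blockage costs at most a bounded number of gradings proportional to the relevant $n_i\leq t(Y)$. This is a purely formal homological-algebra argument with the three-term $\Rin$-modules, but it requires a careful bookkeeping of how $V$ (acting as $U^2$ on $\HMt_{\bullet}$) interacts with the $Q$-action across the exact sequence; I expect the constant $C'$ that emerges to be small (on the order of $2$ or $4$), though the precise value is immaterial for the statement.
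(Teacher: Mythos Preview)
Your overall architecture matches the paper's exactly: reduce to bounding $\delta(Y)-\gamma(Y)$ by a multiple of $t(Y)$, and then obtain $\alpha(Y)-\delta(Y)$ by applying the same bound to $\bar Y$ together with $t(\bar Y)=t(Y)$. That part is fine.

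The gap is in your argument for $\delta(Y)-\gamma(Y)\leq C_1 t(Y)$. You assert that ``below grading roughly $2\delta(Y)-4t(Y)$ the module $\HMt_{\bullet}(Y)$ agrees with $\ztwo[U^{-1},U]]/U\ztwo[U]$ shifted''. This is false: the torsion summands $\ztwo[U]/U^{n_i}$ can sit at arbitrary gradings, unrelated to $\delta(Y)$ or $t(Y)$. So you cannot conclude that the Gysin sequence is forced into its standard four-periodic form in that range, and the rest of your sketch does not go through as written.

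The paper's argument avoids this by never claiming that $\HMt_{\bullet}(Y)$ is torsion-free in any range. Instead it argues directly about the towers in $\HSt_{\bullet}(Y)$. First, every element of the $\gamma$-tower in degree $<2\delta(Y)$ lies in $\mathrm{Im}\,Q$ (this is Lemma~\ref{dp1}). Since $\alpha(Y)\geq\delta(Y)$, any element of the $\beta$-tower in degree $<2\delta(Y)$ is annihilated by $Q$, hence lies in $\mathrm{Im}\,\pi_*$; and because $\pi_*$ carries the $U$-tower of $\HMt_{\bullet}$ into the $\alpha$-tower (wrong grading mod $4$), the preimage must be $U$-torsion. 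Since $V$ acts as $U^2$, the length of this sub-$\beta$-tower is bounded linearly by $t(Y)$, giving $\max\{0,\delta(Y)-\beta(Y)\}\leq C'''t(Y)$. One then repeats the argument for the portion of the $\gamma$-tower below $\min(2\delta(Y),2\beta(Y))$, and sums the two bounds. The point is that torsion at irrelevant gradings is harmless; what matters is that the portion of each tower below $2\delta(Y)$ is \emph{produced} by torsion via $\pi_*$, and a single torsion summand can only produce a $V$-chain of bounded length.
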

\begin{proof}
The key point is to show that for every $Y$ there is an inequality $\delta(Y)-\gamma(Y)\leq C'' t(Y)$. Indeed, if this holds, we have
\begin{equation*}
\alpha(Y)-\delta(Y)=\delta(\bar{Y})-\gamma(\bar{Y})\leq C''t(\bar{Y})=C''t(Y),
\end{equation*}
where the first equality follows from the properties of the correction terms under orientation reversal and the second follows from property $(1)$ above. The Gysin exact sequence implies that the elements in the $\gamma$-tower which have degree less than $2\delta(Y)$ are in the image of $Q$. As $\alpha\geq \delta$, the portion of the $\beta$-tower in degree less than $2\delta(Y)$ is in the image of $\pi_*$, and as the $V$-action is mapped to a $U^2$-action it comes from some torsion subgroup of $\HMt_{\bullet}$. This implies that for some constant $C'''>0$ we have
\begin{equation*}
\max\{0,\delta(Y)-\beta(Y)\}\leq C''' t(Y).
\end{equation*} 
Similarly, for the portion of the $\gamma$-tower in degrees less than $\mathrm{min}(2\delta(Y),2\beta(Y))$ is also in the image of $\pi_*$, hence it also provides a linear lower bound on $t(Y)$. The claim follows by summing these two lower bounds.
\end{proof}

\begin{proof}[Proof of Theorem \ref{genus}]
This follows from the previous proposition, the observations about $t(Y)$ and Theorem $3$ in \cite{Gai}, which for our purposes implies the following: if $Y$ is obtained by surgery on a knot of genus $g$, then $t(Y)\leq 2g$ (while the author only explicitly talks about $S^3$, his results hold for any integral homology $L$-space). Of course, here we use the isomorphism between monopole and Heegaard Floer homology (see \cite{HFHM1}, \cite{CGH} and subsequent papers).
\end{proof}

\vspace{0.5cm}

\section{The surgery exact triangle in an $L$-space}\label{exact}
In this section we study the properties of the surgery exact triangle in $\Pin$-monopole Floer homology in the case of a knot in an integral $L$-space. We first recall the main result of \cite{Lin2}. Suppose $Z$ is a manifold with torus boundary, and consider three oriented curves $\gamma_i$, $i=1,2,3$ so that
\begin{equation*}
\gamma_i\cdot \gamma_{i+1}=-1.
\end{equation*}
Denote by $Y_i$ the three manifold obtained by Dehn filling $Z$ along $\gamma_i$. There is a natural cobordism $W_i$ from $Y_i$ to $Y_{i+1}$ obtained by a single $2$-handle attachment. Of these three cobordisms, exactly two are spin, and we can assume after relabeling that these are $W_1$ and $W_2$. Then there is a well-defined map of $\Rin$-modules
\begin{equation*}
\check{F}_3: \HSt_{\bullet}(Y_3)\rightarrow \HSt_{\bullet}(Y_1)
\end{equation*}
so that the triangle
\begin{center}
\begin{tikzpicture}
\matrix (m) [matrix of math nodes,row sep=2em,column sep=1.5em,minimum width=2em]
  {
  \HSt_{\bullet}(Y_1)&& \HSt_{\bullet}(Y_2)\\
  &\HSt_{\bullet}(Y_3)&\\};
  \path[-stealth]
  (m-1-1) edge node [above]{$\HSt_{\bullet}(W_1)$} (m-1-3)
   (m-2-2) edge node [left]{$\check{F}_3$}(m-1-1)
  (m-1-3) edge node [right]{$\HSt_{\bullet}(W_2)$} (m-2-2)  
  ;
\end{tikzpicture}
\end{center}
is exact. The analogous statement is true for the \textit{from} and \textit{bar} versions.
\par
We are particularly interested in the case in which $Z$ is the complement of a knot $K$ in a homology sphere $Y$, and the surgery triple is of the form
\begin{equation*}
1/p,\quad 0,\quad 1/(p+1)
\end{equation*}
(with respect to the Seifert framing). We denote by $\check{S}_{1/q}$ the group $\HSt_{\bullet}(Y_{1/q}(K))$. The surgery exact triangle specializes to the triangle
\begin{center}
\begin{tikzpicture}
\matrix (m) [matrix of math nodes,row sep=2em,column sep=1.5em,minimum width=2em]
  {
  \check{S}_{1/q} && \check{S}_{0}\\
  &\check{S}_{1/(q+1)} &\\};
  \path[-stealth]
  (m-1-1) edge node [above]{$\check{A}^s_q$} (m-1-3)
   (m-2-2) edge node {}(m-1-1)
  (m-1-3) edge node [right]{$\check{B}^s_{q+1}$} (m-2-2)  
  ;
\end{tikzpicture}
\end{center}
where the maps $\check{A}^s_q$ and $\check{B}^s_q$ are those induced by the spin cobordisms. We use an analogous notation for the \textit{from} and \textit{bar} versions. Also, we will consider the usual monopole Floer counterpart: we will denote the groups $\HMt_{\bullet}(Y_{1/q}(K))$ by $\check{M}_{1/q}$ and the maps by $\check{A}^m_q$ and $\check{B}^m_q$ respectively. We can now state our first observation regarding the surgery exact triangle. In \cite{KMOS} it is shown that the composition 
\begin{equation}\label{composite}
\check{B}^m_{q}\circ \check{A}^m_q:\check{M}_{1/q} \rightarrow \check{M}_{1/q} 
\end{equation}
is zero. This follows from exhibiting an embedded sphere of self-intersection zero and applying a border case of the adjunction inequality. This is the key observation when making inductive arguments involving the surgery exact triangle. In the $\Pin$ case, we have the following.
\begin{lemma}\label{key}
The composite (\ref{composite}) is given by multiplication by $Q$. 
\end{lemma}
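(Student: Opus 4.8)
The plan is to show that the cobordism $X$ underlying the $\Pin$-equivariant analogue of the composite (\ref{composite}) -- that is,
$$X\colon\ Y_{1/q}(K)\ \longrightarrow\ Y_0(K)\ \longrightarrow\ Y_{1/q}(K),$$
built by stacking the two spin $2$-handle cobordisms inducing $\check{A}^s_q$ and $\check{B}^s_q$ -- induces multiplication by $Q$ on $\HSt_{\bullet}$. The starting point is that $X$ is exactly the cobordism analysed in \cite{KMOS}: the two $2$-handles are attached along curves meeting once on $\partial Z$, so $X$ contains an embedded sphere $S$ with $[S]^2=0$ and $[S]\neq 0$ in $H_2(X)$, and this is what forces the ordinary monopole map $\HMt_{\bullet}(X)=\check{B}^m_q\circ\check{A}^m_q$ to vanish. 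I would feed $\HMt_{\bullet}(X)=0$ into the naturality of the Gysin sequence to see that $\HSt_{\bullet}(X)$ factors through $\cdot Q$, and then use a local computation near $S$ to identify the remaining factor.

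First, since the Gysin sequence
$$\cdots \stackrel{\pi_*}{\longrightarrow} \HSt_{\bullet}(Y)\stackrel{\cdot Q}{\longrightarrow}\HSt_{\bullet}(Y)\stackrel{\iota_*}{\longrightarrow} \HMt_{\bullet}(Y)\stackrel{\pi_*}{\longrightarrow} \HSt_{\bullet}(Y)\stackrel{\cdot Q}{\longrightarrow} \cdots$$
is compatible with cobordism maps, we obtain $\iota_*\circ\HSt_{\bullet}(X)=\HMt_{\bullet}(X)\circ\iota_*=0$ and $\HSt_{\bullet}(X)\circ\pi_*=\pi_*\circ\HMt_{\bullet}(X)=0$. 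Exactness then gives $\mathrm{Im}\,\HSt_{\bullet}(X)\subseteq\ker\iota_*=\mathrm{Im}(\cdot Q)$ and $\ker(\cdot Q)=\mathrm{Im}\,\pi_*\subseteq\ker\HSt_{\bullet}(X)$, so $\HSt_{\bullet}(X)$ descends to an $\Rin$-module endomorphism $\Phi$ of $\mathrm{Im}(\cdot Q)\cong\HSt_{\bullet}(Y_{1/q}(K))/\ker(\cdot Q)$ with $\HSt_{\bullet}(X)=\Phi\circ(\cdot Q)$. The grading shift of $\HSt_{\bullet}(X)$ is $\tfrac{1}{4}\bigl(c_1(\spin)^2-2\chi(X)-3\sigma(X)\bigr)=\tfrac{1}{4}(0-4-0)=-1$ -- here $c_1(\spin)^2=0$ since $\spin$ is self-conjugate, $\chi(X)=2$ from the two $2$-handles, and $\sigma(X)=0$ because the intersection form of $X$ is isotropic on the nonzero class $[S]$. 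This equals the degree of $\cdot Q$, so $\Phi$ has degree zero, and the lemma reduces to showing $\Phi=\mathrm{id}$.

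For this I would localise the cobordism map near $S$. Stretching the neck along $N=\partial\nu(S)\cong S^2\times S^1$ decomposes $X=X^{\circ}\cup_{N}\bigl(S^2\times D^2\bigr)$, and the gluing formula presents $\HSt_{\bullet}(X)$ as the universal $\Rin$-module operation on $\HSt_{\bullet}(Y_{1/q}(K))$ obtained by pairing the part of $X^{\circ}$ against the relative $\Pin$-invariant of $\bigl(S^2\times D^2,\spin\bigr)$ in $\HSt_{\bullet}(S^2\times S^1)$. In ordinary monopole Floer homology this pairing is the one computed in \cite{KMOS}, and it vanishes; in the $\Pin$ refinement, where $\HSt_{\bullet}(S^2\times S^1)$ carries the extra structure recording the $Q$-action that has no counterpart in $\HMt_{\bullet}$, the same computation yields instead the operation $\cdot Q$, and in particular not $0$. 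Equivalently, in large enough degrees one may pass to the bar flavour through the isomorphism $i_*\colon\HSb_{\bullet}\xrightarrow{\ \sim\ }\HSt_{\bullet}$, which intertwines cobordism maps: there $\HSb_{\bullet}(Y_{1/q}(K))$ is free of rank one over $\ztwo[V^{-1},V]][Q]/(Q^3)$, so the only $\Rin$-endomorphisms of degree $-1$ are $0$ and $\cdot Q$, and the bar cobordism map of $X$ -- the formal part of the theory -- is computed directly from the cohomology of $X$ and the relation $[S]^2=0$ to be $\cdot Q$.

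The main obstacle is precisely this distinction between $Q$ and $0$: the Gysin sequence alone only certifies that $\HSt_{\bullet}(X)=\Phi\circ(\cdot Q)$ for some degree-zero $\Phi$, and excluding $\Phi=0$ -- or, more generally, any $\Phi$ killing one of the three towers -- requires genuine Seiberg--Witten input. I expect the cleanest implementation to run through a direct computation of $\HSt_{\bullet}(S^2\times S^1)$ and of the cobordism maps of the standard piece $S^2\times D^2$ and its complement, assembled afterwards; this also has the advantage of producing the identity $\HSt_{\bullet}(X)=\Phi\circ(\cdot Q)$ with $\Phi=\mathrm{id}$ on all of $\HSt_{\bullet}(Y_{1/q}(K))$, torsion included, rather than only on the reducible part.
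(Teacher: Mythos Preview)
Your Gysin-sequence reduction is sound and gets you to $\HSt_{\bullet}(X)=\Phi\circ(\cdot Q)$ with $\Phi$ of degree zero; the observation that on $\HSb_{\bullet}$ the only degree $-1$ $\Rin$-endomorphisms are $0$ and $\cdot Q$ is also correct. But, as you yourself concede, the argument does not actually distinguish these two possibilities. The appeals to a gluing formula along $\partial\nu(S)$ and to the bar map being ``computed directly from the cohomology of $X$ and the relation $[S]^2=0$'' are programmatic rather than proofs: you never identify the complement $X^{\circ}$, so the local-to-global assembly is left open, and the proposal stops at exactly the point where the content lies. Nothing written excludes $\Phi=0$.

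The paper closes this gap by a direct topological identification of $X$ rather than by analysis. The second $2$-handle is attached along a $0$-framed meridian of the first; trading it for a $1$-handle produces a cancelling $1$/$2$-handle pair, so $X$ is diffeomorphic to the product cobordism $[0,1]\times Y_{1/q}(K)$ with a neighbourhood $S^1\times D^3$ of a loop removed and $S^2\times D^2$ glued in. One then invokes the computation (Theorem~5 of \cite{Lin2}) that $S^2\times S^2$ with two balls removed induces multiplication by $Q$. In your language this is exactly the identification $X^{\circ}\cong[0,1]\times Y_{1/q}(K)$, whence $\Phi=\mathrm{id}$, with the local $S^2\times D^2$ piece contributing the factor $\cdot Q$; the Kirby-calculus handle trade supplies precisely the missing step in your outline, and no neck-stretching or Gysin argument is needed.
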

\begin{proof}
The composition of the two cobordisms is described by the Kirby diagram in Figure \ref{composition2}. The cobordism from $Y_{1/q}(K)$ to $Y_0(K)$ is given by a two handle attachment along the knot $K'$, while the following one from $Y_0(K)$ to $Y_{1/q}(K)$ is given by attaching another two handle along a zero framed meridian of $K'$. If we trade this second handle for a $1$-handle (i.e. adding a dot in the notation of \cite{GS}), we obtain a pair of canceling $1$ and $2$-handles. In particular, the composite cobordism is obtained from the product one $[0,1]\times Y_{1/q}(K)$ by removing a neighborhood $S^1\times D^3$ of a loop and replacing it by $S^2\times D^2$. The result then readily follows from the fact that the map induced by $S^2\times S^2$ with two ball removed induces multiplication by $Q$ (see the proof of Theorem $5$ in \cite{Lin2}).
\end{proof}

\begin{figure}
  \centering
\def\svgwidth{0.9\textwidth}
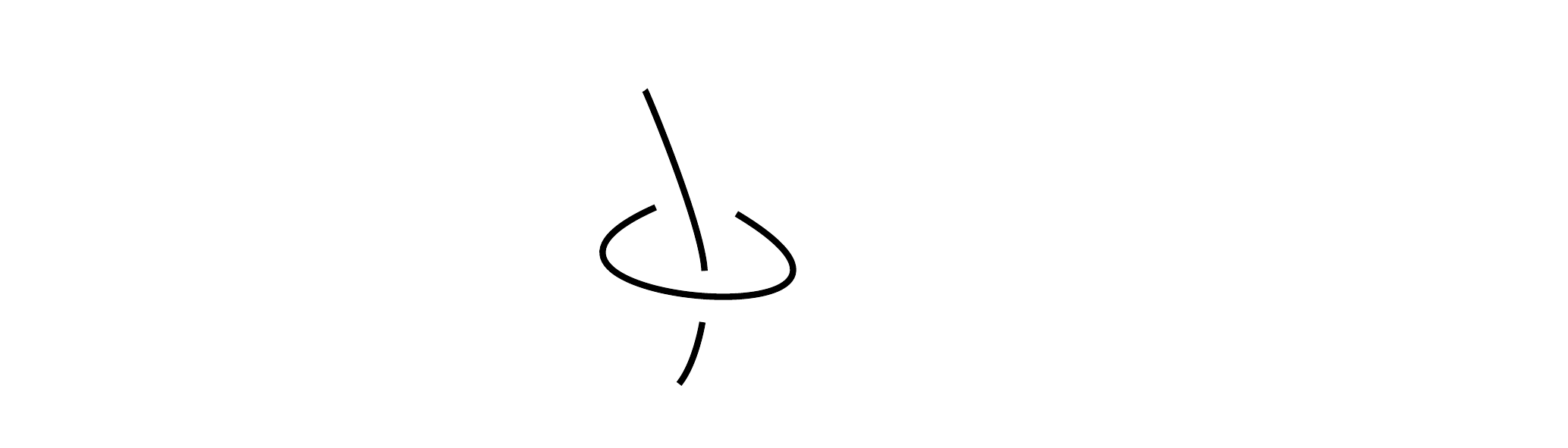
\caption{A handlebody description of the composite of the cobordisms defining the map $\check{B}_{q-1}\circ \check{A}_q$. This link is inside $Y_{1/q}$.}
    \label{composition2}
\end{figure}

Recall that as $Y_0(K)$ is a homology $S^1\times S^2$ we have that
\begin{equation*}
\bar{M}_0\equiv \ztwo[U^{-1},U]]\langle-1\rangle\oplus \ztwo[U^{-1},U]].
\end{equation*}
We call the image in $\check{M}_0$ of the two summands respectively the bottom and top $U$-towers. We record the following simple observation.
\begin{lemma}\label{usualkey}
For $q\geq 0$, the image of $\check{A}^m_q$ coincides with the bottom tower of $\check{M}_0$.
\end{lemma}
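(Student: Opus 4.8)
The plan is to deduce the statement from three inputs: the behaviour of the \emph{bar} version $\bar{A}^m_q$ of the map, the vanishing of the composite $\check{B}^m_{q}\circ\check{A}^m_q$ recorded above (from \cite{KMOS}), and some elementary module theory over $\ztwo[U]$. I would start with the bar version. Since $Y_{1/q}(K)$ is an integral homology sphere, $\bar{M}_{1/q}\cong\HMb_{\bullet}(S^3)=\ztwo[U^{-1},U]]$ is one-dimensional over the field $\ztwo[U^{-1},U]]$, whereas $\bar{M}_0\cong\ztwo[U^{-1},U]]\langle-1\rangle\oplus\ztwo[U^{-1},U]]$ is two-dimensional, with its two towers concentrated in gradings of opposite parity. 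Running the bar surgery exact triangle and counting ranks over the field $\ztwo[U^{-1},U]]$ (the dimensions being $1,2,1$) forces $\bar{A}^m_q$ to be injective with one-dimensional image; being a graded subspace, that image is a single homogeneous line, hence equals one of the two towers.

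To see it is the \emph{bottom} one — the point where the hypothesis $q\geq 0$ genuinely enters — I would reduce to the base case $q=0$. There $\check{A}^m_0$ is the map of the $0$-framed $2$-handle cobordism $W\colon Y\to Y_0(K)$, whose intersection form is the zero form (the $0$-framing is the Seifert framing, and $Y$ being a homology sphere $K$ bounds a surface), so $\chi(W)=1$, $\sigma(W)=0$, and $c_1^2=0$ on every spin$^c$ structure; hence the degree shift $\big(c_1^2-2\chi(W)-3\sigma(W)\big)/4$ equals $-\tfrac12$ on every contributing summand. Comparing with the gradings of the two towers of $\bar{M}_0$, and using that the tower of $\HMt_{\bullet}(Y)$ sits in gradings $2\delta(Y),2\delta(Y)+2,\dots$ since $Y$ is an integral $L$-space, identifies the target as the bottom tower; for $q<0$ the analogous cobordism has the opposite sign and the image would instead be the top tower. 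This grading and orientation bookkeeping is the step I expect to be the main obstacle — everything else below is formal — since one must be careful about the normalization of the absolute $\mathbb{Q}$-grading on $\HMb_{\bullet}(Y_0(K))$ and about the sign conventions for the cobordism maps.

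Finally I would promote the statement to $\check{M}_0$. Because $i_*$ is an isomorphism in all sufficiently high gradings and is compatible with the surgery maps, the image of $\check{A}^m_q$ agrees with that of $\bar{A}^m_q$ in high gradings, so in particular it is infinite and agrees with the bottom tower of $\check{M}_0$ there. On the other hand, the relation $\check{B}^m_{q}\circ\check{A}^m_q=0$ together with exactness of the triangle $\check{M}_{1/(q-1)}\xrightarrow{\check{A}^m_{q-1}}\check{M}_0\xrightarrow{\check{B}^m_{q}}\check{M}_{1/q}$ gives $\mathrm{Im}\,\check{A}^m_q\subseteq\ker\check{B}^m_q=\mathrm{Im}\,\check{A}^m_{q-1}$; by downward induction $\mathrm{Im}\,\check{A}^m_q\subseteq\mathrm{Im}\,\check{A}^m_0$ for every $q\geq 0$, and $\mathrm{Im}\,\check{A}^m_0$, being an infinite quotient of the single tower $\HMt_{\bullet}(Y)$ that coincides with the bottom tower in high gradings, is exactly the bottom tower of $\check{M}_0$. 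Hence $\mathrm{Im}\,\check{A}^m_q$ is an infinite $\ztwo[U]$-submodule of the bottom tower $\cong\ztwo[U^{-1},U]]/U\ztwo[[U]]$; since every proper submodule of the latter is finite, it must be the whole bottom tower.
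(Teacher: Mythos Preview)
Your argument is correct and follows the same inductive strategy as the paper: the base case $q=0$ is immediate since $Y$ is an $L$-space, and the inductive step uses $\check{B}^m_q\circ\check{A}^m_q=0$ together with exactness of the triangle for $(1/(q-1),0,1/q)$ to obtain $\mathrm{Im}\,\check{A}^m_q\subseteq\ker\check{B}^m_q=\mathrm{Im}\,\check{A}^m_{q-1}$. The paper's proof is three sentences and leaves implicit both the grading check for $q=0$ and the passage from containment to equality, which you spell out via the bar version and the observation that every infinite $\ztwo[U]$-submodule of $\V^+$ is the whole module; your detailed cobordism-degree computation for the base case is more than what is strictly needed (the surjectivity of $i_*$ on $\check{M}_\infty$ for an $L$-space already forces $\mathrm{Im}\,\check{A}^m_0\subseteq i_*(\mathrm{Im}\,\bar{A}^m_0)$), but it is correct.
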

\begin{proof}
The result is obvious for $\check{A}^m_0$. This implies in particular that elements not in the bottom tower have non trivial image under $\check{B}^m_1$. Now the result follows by induction, using the fact that $\check{B}^m_{q}\circ \check{A}^m_q$ is zero.
\end{proof}

\vspace{0.3cm}
In the $\Pin$-setting, as discussed in \cite{Lin2} there is a big qualitative difference in the shape of the surgery exact triangle in the cases where the knot has Arf invariant zero or one. This follows from the relation between the $\Pin$ invariants and the Rokhlin invariant. If $\Arf(K)=0$, we have the identification
\begin{equation}\label{arf0}
\bar{S}_0= \ztwo[V^{-1},V]][Q]/(Q^3)\langle-1\rangle \oplus \ztwo[V^{-1},V]][Q]/(Q^3).
\end{equation}
This decomposes as a $\ztwo[[V]]$-module as a direct sum of six modules $\V$, and we call the image of the ones in the first summand the bottom $\alpha$, $\beta$ and $\gamma$ towers, and the ones in the second summand the top $\alpha$, $\beta$ and $\gamma$ towers. If $\Arf(K)=1$ we have the identification as $\ztwo[[V]]$-modules
\begin{equation}\label{arf1}
\bar{S}_0= (\V\langle1\rangle\oplus \V)\oplus (\V\langle1\rangle\oplus \V)\langle 2\rangle
\end{equation}
and the action of $Q$ is an isomorphism from the summand to the second and from the third summand to the fourth. Their image in $\check{S}_0$ will be called respectively the bottom $\gamma$ and $\beta$ towers and the top $\beta$ and $\alpha$ towers. Intuitively, the main difference between the $\Arf(K)=1$ case is that the top $\gamma$-tower cancels with the bottom $\alpha$-tower (cf. \cite{Lin2}).
\\
\par
In the case of a knot with Arf invariant $1$, the exact triangle in the \textit{bar} version looks like
\begin{center}
\begin{tikzpicture}
\matrix (m) [matrix of math nodes,row sep=0.5em,column sep=2em,minimum width=2em]
  { & \vdots &\\
  \ztwo & \ztwo& \ztwo\\
  \ztwo & \ztwo&\cdot \\
  \ztwo & \ztwo& \ztwo\\
  \cdot & \ztwo&\ztwo \\
  \ztwo & \ztwo& \ztwo\\
  & \vdots & \\
  };
  \path[-stealth]
  (m-2-2) edge node {}(m-2-3)  
  (m-2-1) edge node {}(m-3-2)
  (m-3-1) edge node {}(m-4-2)
  (m-5-2) edge node {}(m-5-3)
  (m-6-2) edge node {}(m-6-3)
  ;
\end{tikzpicture}
\end{center}
extended in both directions in a four-periodic fashion. The groups are from left to right $\bar{S}_{1/p}$, $\bar{S}_0$ and $\bar{S}_{1/{p+1}}$, and the depicted maps are $\bar{A}^s_q$ and $\bar{B}^s_q$ (which have degree respectively $-1$ and $0$). The third map is an isomorphism from the $\gamma$-tower of $\bar{S}_{1/{p+1}}$ to the $\alpha$- tower of $\bar{S}_{1/p}$, and is given by the multiplication by a power series in $V$ with leading term $1$. Notice that the maps $\bar{A}^s_q$ depend on the parity of $q$ once one takes into account the grading modulo four. This is related to the fact that the Rokhlin invariant changes by doing odd surgery ona knot with Arf invariant $1$. The bottom $\beta$ and $\gamma$-towers correspond to the image of $\bar{A}^s_q$ for $q$ even, while the top $\alpha$ and $\beta$-towers correspond to the image of $\bar{A}^s_q$ for $q$ odd.
\par
The $\Arf(K)=0$ is a little trickier. This is because, unlike the $\Arf(K)=1$ case, the identification (\ref{arf0}) above is not canonical, and for a fixed identification the exact triangle in the \textit{bar} version could look either as
\begin{center}
\begin{tikzpicture}
\matrix (m) [matrix of math nodes,row sep=0.5em,column sep=1.5em,minimum width=2em]
  {

  \ztwo & \cdot& \ztwo&\ztwo\\
  \ztwo & \ztwo&\ztwo&\ztwo \\
  \ztwo & \ztwo&\ztwo& \ztwo\\
  \cdot & \ztwo&\cdot \\
  };
  \path[-stealth]
	  (m-1-1) edge node {}(m-2-2)  
	  (m-2-1) edge node {}(m-3-2)  
	  (m-3-1) edge node {}(m-4-2)  
	  
	  (m-1-3) edge node {}(m-1-4)  
	  (m-2-3) edge node {}(m-2-4)  
	  (m-3-3) edge node {}(m-3-4)

  ;
\end{tikzpicture}
\end{center}
or
\begin{center}
\begin{tikzpicture}
\matrix (m) [matrix of math nodes,row sep=0.5em,column sep=1.5em,minimum width=2em]
  {

  \ztwo & \cdot& \ztwo&\ztwo\\
  \ztwo & \ztwo&\ztwo&\ztwo \\
  \ztwo & \ztwo&\ztwo& \ztwo\\
  \cdot & \ztwo&\cdot \\
  };
  \path[-stealth]
	  (m-1-1) edge node {}(m-2-2)  
	  (m-2-1) edge node {}(m-3-2)  
	  (m-3-1) edge node {}(m-4-2)  
	 	
	 (m-1-1) edge node {}(m-2-3)  
	  (m-2-1) edge node {}(m-3-3)

	  (m-1-3) edge node {}(m-1-4)  
	  (m-2-3) edge node {}(m-2-4)  
	  (m-3-3) edge node {}(m-3-4)  
	  
	  (m-2-2) edge[bend right] node {} (m-2-4)
	  (m-3-2) edge[bend right] node {} (m-3-4)
	  
  ;
\end{tikzpicture}
\end{center}
In both cases the two central columns correspond to $\bar{S}_0$ and the picture extend in both directions four-periodically. The second observation, which is a direct consequence of Lemma \ref{key}, is the following.
\begin{lemma}\label{maps}
Suppose we have chosen the identification (\ref{arf0}) so that $\bar{A}^s_0$ and $\bar{B}^s_1$ are as in the first case above. Then $\bar{A}^s_q$ and $\bar{B}^s_{q+1}$ are as is the first case if $q$ is even, and as in the second if $q$ is odd.
\end{lemma}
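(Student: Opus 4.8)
The plan is to argue by induction on $|q|$, the base case $q=0$ being exactly the hypothesis. The case $q<0$ reduces to the case $q>0$: reversing orientation replaces $K\subset Y$ by its mirror in $\bar Y$ (again an integral homology $L$-space) and, via the duality $\alpha(\bar Y)=-\gamma(Y)$, $\beta(\bar Y)=-\beta(Y)$, intertwines the two triangles, so it suffices to treat the inductive step $q\to q+1$ for $q\geq 0$. I would begin by fixing coordinates: write $\bar S_0=M_1\oplus M_2$ for the chosen identification (\ref{arf0}), normalized so that $(\bar A^s_0,\bar B^s_1)$ is in the first case, with $v_1$ a generator of $M_1$ and $m'$ a generator of $M_2$. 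Since $\Arf(K)=0$ forces $\mu(Y_{1/q}(K))=0$ for every $q$, all the groups $\bar S_{1/q}$ have their $\gamma$-, $\beta$-, $\alpha$-towers in the same residues mod $4$; tracking these residues shows that, for a generator $g_q$ of the $\gamma$-tower of $\bar S_{1/q}$, the element $\bar A^s_q(g_q)$ lies in one fixed two-dimensional graded piece of $\bar S_0$ and hence has the form $\alpha\,v_1+\beta\,Q m'$ with $\alpha,\beta\in\ztwo[V^{-1},V]]$ — being in the first case means $\beta=0$, being in the second means $\beta\neq 0$. Because exactness of the bar triangle gives $\ker\bar B^s_{q+1}=\mathrm{Im}\,\bar A^s_q$, it is enough to control the maps $\bar A^s_q$; the statement about the $\bar B$'s then follows formally.

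Two preliminary observations feed the induction. First, a rank count over the field $\ztwo[V^{-1},V]]$ shows that when $\Arf(K)=0$ the connecting map of the bar triangle has rank zero, hence vanishes; therefore each $\bar A^s_q$ is injective (and each $\bar B^s_{q+1}$ surjective), which already forces $Q^2\bar A^s_q(g_q)=\alpha\,Q^2 v_1\neq 0$, so $\alpha$ is a unit. Second, Lemma \ref{key} gives that the composite $\bar S_{1/q}\to\bar S_0\to\bar S_{1/q}$ of the two spin cobordism maps is multiplication by $Q$, for every $q$. The easy half of the inductive step is then the passage from an even $q$ to $q+1$: the inductive hypothesis says $(\bar A^s_q,\bar B^s_{q+1})$ is in the first case, so $\bar B^s_{q+1}$ annihilates $M_1$ and restricts to an isomorphism on $M_2$; applying $\bar B^s_{q+1}$ to $\bar A^s_{q+1}(g_{q+1})=\alpha\,v_1+\beta\,Q m'$ and comparing with $\bar B^s_{q+1}\circ\bar A^s_{q+1}=Q$ forces $\beta$ to be a unit, so $\bar A^s_{q+1}(g_{q+1})$ has nonzero components in both summands, i.e.\ $(\bar A^s_{q+1},\bar B^s_{q+2})$ is in the second case, as required.

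The delicate half — and the main obstacle — is the passage from an odd $q$ (second case) to $q+1$: one must show $\bar A^s_{q+1}(g_{q+1})$ has no $M_2$-component. Now $\bar B^s_{q+1}$ does not annihilate $M_1$, and the relation $\bar B^s_{q+1}\circ\bar A^s_{q+1}=Q$ yields only a single linear equation relating $\alpha$ and $\beta$; together with injectivity this pins $\alpha$ but leaves $\beta$ undetermined, so the formal properties of the exact triangle are genuinely not enough to conclude. To close the gap I would bring in the monopole-theoretic input: by Lemma \ref{usualkey} the image of $\bar A^m_q$ is the bottom $U$-tower of $\bar M_0$ for every $q\geq 0$, in particular independent of the parity of $q$, and the Gysin exact sequence is natural under maps induced by cobordisms, giving commuting squares $\iota_*\circ\bar A^s_q=\bar A^m_q\circ\iota_*$ and $\bar A^s_q\circ\pi_*=\pi_*\circ\bar A^m_q$ for the relevant manifolds. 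Feeding the $q$-independent monopole data through these squares and reducing modulo $\ker\iota_*=\mathrm{Im}(Q)$ on $\bar S_0$ should force $\bar A^s_{q+1}$ into the same shape — up to the four-periodicity — as $\bar A^s_{q-1}$, namely the first case. The real work is to run this comparison while keeping precise track of the absolute $\mathbb{Q}$-gradings reduced mod $4$: it is the interaction of this grading bookkeeping with the monopole-level rigidity, not the combinatorics of the surgery triangle itself, that rules out the intermediate \emph{mixed} images for $\bar A^s_{q+1}$ and produces the strict even/odd alternation.
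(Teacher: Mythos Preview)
Your overall architecture—induction on $q$ driven by the relation $\bar B^s_{q}\circ\bar A^s_{q}=Q$ from Lemma~\ref{key}—is exactly what the paper has in mind: the paper offers no proof beyond the single sentence ``which is a direct consequence of Lemma~\ref{key}.'' Your even-to-odd step is carried out correctly, and in fact you go further than the paper by isolating the odd-to-even passage as the nontrivial point.

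That said, there is a genuine gap in your proposed resolution of the odd-to-even step. You plan to use naturality of the Gysin sequence together with Lemma~\ref{usualkey} to force $\beta'=0$. But chase the diagrams: on the \textit{bar} level $\ker\iota_*=\mathrm{Im}(Q)$ and $\mathrm{Im}(\pi_*)\subset\mathrm{Im}(Q^2)$. Writing $\bar A^s_{q+1}(g_{q+1})=\alpha' v_1+\beta' Qm'$, the square $\iota_*\circ\bar A^s_{q+1}=\bar A^m_{q+1}\circ\iota_*$ gives
\[
\iota_*(\alpha' v_1+\beta' Qm')=\alpha'\,\iota_*(v_1),
\]
which lands in the bottom $U$-tower regardless of $\beta'$; and the square $\bar A^s_{q+1}\circ\pi_*=\pi_*\circ\bar A^m_{q+1}$ only sees $Q^2\bar A^s_{q+1}(g_{q+1})=\alpha' Q^2 v_1$, again independent of $\beta'$. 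In other words, the monopole-level rigidity you invoke lives exactly in the quotient by $\mathrm{Im}(Q)$, while the coefficient you are trying to kill sits \emph{inside} $\mathrm{Im}(Q)$. So the Gysin comparison, as you describe it, places no constraint on $\beta'$ at all, and the ``should force'' in your last paragraph does not go through.

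In short: you have correctly located the difficulty and shown more care than the paper's one-line justification, but the mechanism you propose to close the gap does not work. An actual argument for the odd-to-even step needs an input that genuinely sees the $Qm'$-component—something beyond what either Lemma~\ref{key} (which yields only the single relation $\alpha'+\beta'$ equals a unit) or the Gysin squares (which annihilate that component) can provide.
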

We will always that the identification (\ref{arf0}) is made according to this lemma.
\\
\par
In the case of usual Floer homology (see \cite{OSd}), we can define the correction terms for manifolds with $b_1=1$ by looking at the minimal gradings of the two $U$-towers. Focusing on the zero surgery on a knot $K$, this allows us to define the invariants
\begin{align*}
\delta_+(K)&=\frac{1}{2}\mathrm{min}\{\mathrm{gr}(x)\lvert x\in \text{top $U$-tower of }\check{M}_0\}\}\\
\delta_-(K)&=\frac{1}{2}\left(\mathrm{min}\{\mathrm{gr}(x)\lvert x\in \text{bottom $U$-tower of }\check{M}_0\}\}+1\right),
\end{align*}
which are normalized to be zero in $S^2\times S^1$. They correspond respectively to $(d_t-1/2)/2$ and $(d_b+1/2)/2$ in the Heegaard Floer context (recall that the grading conventions between the two theories differs by $b_1(Y)/2$).
\par
In the $\Pin$-setting, for the zero surgery on a knot with $\Arf(K)=0$ we can define the six correction terms $\alpha_{\pm},\beta_{\pm},\gamma_{\pm}$ as in the case of a homology sphere (see Section \ref{review}), where the \textit{bottom} correction terms (indicated with the minus) correspond to the left summand in (\ref{arf0}), while the top correspond to the quotient of $i_*(\bar{S}_0)$ by $\mathrm{Im}\check{A}^s_0\cap i_*(\bar{S}_0)$ (for which the notion of $\alpha$, $\beta$ and $\gamma$-towers is well defined). We also take account of the shift on the left summand and normalize bottom invariants by shifting their degrees up by $1$ as in the case of $\delta_-(K)$ above. In formulas:
\begin{align*}
\alpha_-(K)&=\frac{1}{2}\left(\mathrm{min}\{\mathrm{gr}(x)\lvert x\in \text{bottom $\alpha$-tower of }\check{S}_0\}\}+1\right)\\
\beta_-(K)&=\frac{1}{2}\left(\mathrm{min}\{\mathrm{gr}(x)\lvert x\in \text{bottom $\beta$-tower of }\check{S}_0\}\}\right)\\
\gamma_-(K)&=\frac{1}{2}\left(\mathrm{min}\{\mathrm{gr}(x)\lvert x\in \text{bottom $\gamma$-tower of }\check{S}_0\}\}-1\right)\\
\alpha_+(K)&=\frac{1}{2}\left(\mathrm{min}\{\mathrm{gr}(x)\lvert x\in \text{$\alpha$-tower of } i_*(\bar{S}_0)/\left(\mathrm{Im}\check{A}^s_0\cap i_*(\bar{S}_0)\right)\}\right)\\
\beta_+(K)&=\frac{1}{2}\left(\mathrm{min}\{\mathrm{gr}(x)\lvert x\in \text{$\beta$-tower of } i_*(\bar{S}_0)/\left(\mathrm{Im}\check{A}^s_0\cap i_*(\bar{S}_0)\right)\}-1\right)\\
\gamma_+(K)&=\frac{1}{2}\left(\mathrm{min}\{\mathrm{gr}(x)\lvert x\in \text{$\gamma$-tower of } i_*(\bar{S}_0)/\left(\mathrm{Im}\check{A}^s_0\cap i_*(\bar{S}_0)\right)\}-2\right)
\end{align*}
For example, for $S^2\times S^1$ all the six correction terms are zero.
\begin{remark}
The ambiguity choice of the identification (\ref{arf0}) could be a little misleading sometimes, and could lead to very different looking $\check{S}_0$. In particular the correction terms really depend on the choice of the identification. For example consider the module (written horizontally)
\begin{center}
\begin{tikzpicture}
\matrix (m) [matrix of math nodes,row sep=0.5em,column sep=1em,minimum width=2em]
  { 
 \cdot & \cdot &\cdot &\ztwo_{-1} &\ztwo &\ztwo&\cdot&\cdots\\
 \ztwo_{-4} &\ztwo &\ztwo &\cdot &\ztwo&\ztwo &\ztwo&\cdots\\
  };
  \path[-stealth]
 (m-1-5) edge[bend right] node {}(m-1-4)
 (m-1-6) edge[bend right] node {}(m-1-5) 
 (m-2-2) edge[bend left] node {}(m-2-1)
 (m-2-3) edge[bend left] node {}(m-2-2)
 (m-2-7) edge[bend left] node {}(m-2-6)
 (m-2-6) edge[bend left] node {}(m-2-5)
 ;
\end{tikzpicture}
\end{center}
given as a direct sum of towers, where the top row consists of the bottom towers and the bottom row consists of the top towers (for simplicity we have only indicated the $Q$ actions). For example, this is $i_*\bar{S}_0$ for the zero surgery on the torus knot $T(2,7)$. In this case, the bottom correction terms are all zero while the top correction terms are all $-2$. On the other hand we can do a change of basis, so that the module structure will be
\begin{center}
\begin{tikzpicture}
\matrix (m) [matrix of math nodes,row sep=0.5em,column sep=1em,minimum width=2em]
  { 
  \ztwo_{-4} &\ztwo&\cdot & \ztwo &\ztwo &\ztwo&\cdot&\cdots\\
  & & \ztwo & \cdot &\ztwo&\ztwo&\ztwo&\cdots\\
  };
\path[-stealth]
 (m-1-2) edge[bend right] node {}(m-1-1)
 (m-1-5) edge[bend right] node {}(m-1-4)
 (m-1-6) edge[bend right] node {}(m-1-5)
 (m-2-3) edge[bend left] node {}(m-1-2)
 (m-2-6) edge[bend left] node {}(m-2-5)
 (m-2-7) edge[bend left] node {}(m-2-6)
;
  
 \end{tikzpicture}
\end{center}
In this case we have $\alpha_-=0$, $\beta_-=\gamma_-=-2$ and $\alpha_+=\beta_+=0$ and $\gamma_-=-2$.
\end{remark}
\vspace{0.3cm}
In the case of surgery on a knot with $\Arf=1$ we can define the four correction terms: $\beta_-$ and $\gamma_-$ corresponding to the bottom towers, and $\alpha_+$ and $\beta_+$ corresponding to the top towers, by means of the same formulas above. Here we use the same conventions as in the previous case, so that for the zero surgery on the trefoil, for which we computed in \cite{Lin2} that
\begin{equation*}
\check{S}_0= (\V^+_1\oplus \V^+_0)\oplus (\V^+_{-1}\oplus \V^+_{-2}),
\end{equation*}
where $Q$ is an isomorphism from the first tower onto the second and from the third tower onto the forth, the bottom invariants are $0$ while the top invariants are $-1$. In both cases, we will denote the various correction terms as $\alpha_{\pm}(K), \beta_{\pm}(K)$ and $\gamma_{\pm}(K)$, and we can similarly define the correction terms $\delta_{\pm}(K)$ coming from $\check{M}_0$ (these are normalized so that they are both zero for the unknot).
\\
\par
The main observation is the following.
\begin{lemma}\label{zeros}
Suppose $K$ is a knot in an integral $L$-space $Y$. Then $\beta_-(K)=\gamma_-(K)=\delta(Y)$.
\end{lemma}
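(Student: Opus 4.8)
The plan is to run the surgery exact triangle backwards, starting from the large-$q$ end where everything is standard, and track what the bottom $\beta$- and $\gamma$-towers of $\bar{S}_0$ must be. Since $Y$ is an integral $L$-space, Lemma 1 gives $\HSt_{\bullet}(Y)\equiv\HSt_{\bullet}(S^3)$ up to a grading shift by $2\delta(Y)$, so $\bar S_{1/q}\cong\bar S_{1/q'}$ with the shift encoded by $\delta(Y)$ for all $q$; in particular $\alpha(Y_{1/q}(K))$, $\beta(Y_{1/q}(K))$, $\gamma(Y_{1/q}(K))$ are all equal to $\delta(Y)$ whenever $Y_{1/q}(K)$ is itself an $L$-space, which happens for $|q|$ large (by the large surgery formula / the fact that $1/q$-surgeries on a fixed knot eventually have the same Floer homology as the ambient space up to shift — here one uses the monopole/Heegaard Floer identification and \cite{NW}). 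First I would fix the identification (\ref{arf0}) or (\ref{arf1}) according to Lemma \ref{maps}, and recall from the discussion preceding Lemma \ref{maps} that the bottom $\beta$- and $\gamma$-towers of $\bar S_0$ are, by construction, the image of $\bar A^s_q$ for $q$ even (in both the Arf $0$ and Arf $1$ cases). So it suffices to compute the degree of the bottom of $\mathrm{Im}\,\bar A^s_q$ for one convenient even $q$.

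The key mechanism is Lemma \ref{key}: the composite $\check B^s_{q}\circ\check A^s_q$ is multiplication by $Q$, not zero (contrast with the monopole case, Lemma \ref{usualkey}). Running the exact triangle for the \emph{bar} versions and using exactness, one shows inductively — exactly as in Lemma \ref{usualkey} but keeping track of the extra $Q$ — that for $q$ even the image of $\bar A^s_q$ lands in the bottom three towers of $\bar S_0$, is an isomorphism onto the bottom $\gamma$-tower and bottom $\beta$-tower, and hits the bottom $\alpha$-tower only inside $\mathrm{Im}\,Q$ (because $\bar B^s_q\bar A^s_q=\cdot Q$ kills exactly one step). Combining this with the grading conventions (the degree shifts $-1$ and $0$ of the maps $\bar A^s_q$, $\bar B^s_q$ recorded in the text, plus the normalization shifts built into the definitions of $\beta_-,\gamma_-$) gives that the bottoms of the bottom $\beta$- and $\gamma$-towers coincide with the bottom of the $U$-tower of $\bar M_{1/q}(K)\cong\bar M(S^3)$ shifted by $2\delta(Y)$ — i.e. $\beta_-(K)=\gamma_-(K)=\delta(Y)$. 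Here I would lean on the compatibility of the whole picture with the Gysin exact sequence (as in the proof of Lemma \ref{dp1}) to match the $\Pin$ towers with the underlying monopole $U$-tower.

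I expect the main obstacle to be pinning down the grading bookkeeping precisely: there are three separate shifts in play — the intrinsic degrees of $\bar A^s_q$ and $\bar B^s_q$, the periodicity-and-parity behavior flagged in Lemma \ref{maps}, and the $+1$/$-1$ normalization shifts in the definitions of $\alpha_\pm,\beta_\pm,\gamma_\pm$ — and getting the signs consistent so that the two bottom towers come out \emph{equal} (rather than differing by $1$, as $\beta$ and $\gamma$ of an honest homology sphere can) requires genuinely using that $Y$ is an $L$-space, so that the only torsion entering the triangle is the single $Q$-step from Lemma \ref{key}. Once the induction is set up at an even $q$ with $|q|$ large enough that $\bar S_{1/q}$ is standard, the argument should close; the delicate point is verifying that no additional torsion in $\bar S_0$ below degree $2\delta(Y)$ can push the bottom $\beta$- or $\gamma$-tower down, which again is where the $L$-space hypothesis on $Y$ (via Lemma 1) is essential.
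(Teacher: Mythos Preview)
Your proposal contains a genuine error that breaks the argument at its foundation: the claim that $Y_{1/q}(K)$ is an $L$-space for $|q|$ large is false. The large surgery formula concerns \emph{integer} surgeries $Y_n(K)$ with $|n|\gg 0$, not $1/q$-surgeries; for a generic knot (e.g.\ the figure-eight in $S^3$) the reduced Floer homology of $S^3_{1/q}(K)$ grows with $|q|$ and is never that of an $L$-space. The reference \cite{NW} only says that $\delta(Y_{1/q}(K))$ depends on the sign of $q$, not that the full module does. So you have no base case for your induction, and the ``run the triangle backwards from large $q$'' strategy cannot get started.

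Even if one could repair the base case, the route is far more elaborate than necessary. The paper's proof is two lines: since $Y$ is an integral $L$-space, $\check S_\infty\cong\HSt_{\bullet}(S^3)\langle 2\delta(Y)\rangle$, and one simply uses the two $\Rin$-module maps at $q=0$. The map $\check A^s_0:\check S_\infty\to\check S_0$ gives $\beta_-(K)\geq\gamma_-(K)\geq\delta(Y)$ (the towers of $\check S_\infty$ land in the bottom towers of $\check S_0$), and the map $\check B^s_0:\check S_0\to\check S_\infty$ together with the description of $\bar B^s_0$ gives $\delta(Y)\geq\beta_-(K)\geq\gamma_-(K)$. No induction, no Lemma~\ref{key}, no large-$q$ asymptotics are needed; the $L$-space hypothesis enters only to identify $\check S_\infty$.
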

\begin{proof}
By hypothesis, $\HSt_{\bullet}(Y)\equiv \HSt_{\bullet}(S^3)\langle2\delta(Y)\rangle$. The existence of the $\Rin$-module homomorphism $\check{A}_0: \check{S}_{\infty}\rightarrow \check{S}_0$ implies that  $\beta_-(K)\geq\gamma_-(K)\geq\delta(Y)$. The existence of the $\Rin$-module homomorphism $\check{B}_{0}: \check{S}_0\rightarrow \check{S}_{\infty}$ (together with the description of $\bar{B}_0$ in the $\Arf=0$ case) implies that  $\delta(Y)\geq\beta_-(K)\geq\gamma_-(K)$, hence the result follows.
\end{proof}

With this in hand, we can proceed in the proof of Theorem \ref{even} and Proposition \ref{odd}. As the statements are invariant under orientation reversal, we can assume that the surgery coefficient is positive. We focus first on the case of a knot with Arf invariant zero, where the following proposition (which implies both Theorem \ref{even} and Proposition \ref{odd} in the Arf invariant zero case) holds.
\begin{prop}
Consider for $m>0$ the three-manifold $Y'= Y_{1/m}(K)$, where $\Arf(K)=0$ and $Y$ is an integral $L$-space. Then if $m$ is odd we have
\begin{equation*}
\alpha(Y')=\alpha_+(K),\quad\beta(Y')=\beta_+(K),\quad\gamma(Y')=\gamma_+(K)
\end{equation*}
while if $m$ is even we have $\alpha(Y')=\beta(Y')=\delta(Y)$ and
\begin{equation*}
\delta(Y)=
\begin{cases}
\delta_+(K)\text{ if }\delta_+(K)\equiv \delta(Y)\mathrm{ mod}(2)\\
\delta_+(K)-1\text{ otherwise.}
\end{cases}
\end{equation*}
\end{prop}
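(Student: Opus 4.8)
The plan is to run the surgery exact triangle for the triple $1/m, 0, 1/(m+1)$ and induct on $m$, using Lemma \ref{key} as the engine; the base case $m=0$ is just the identification of $\check S_0$ with the module described in \eqref{arf0}, and Lemma \ref{zeros} pins down the bottom towers to $\delta(Y)$. First I would set up the \textit{bar}-version of the triangle in the shape dictated by Lemma \ref{maps}, distinguishing $m$ even from $m$ odd via the two pictures for the $\Arf=0$ case. Passing to the \textit{to}-version, I want to track how $i_*(\bar S_{1/m})$ sits inside $i_*(\bar S_0)$, or rather how $\operatorname{Im}\check A^s_m$ relates to $\operatorname{Im}\check A^s_0$. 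The key point from Lemma \ref{key} is that $\check B^s_m\circ\check A^s_m$ is multiplication by $Q$ rather than zero, so the image of $\check A^s_m$ cannot be a single $\Rin$-summand of $\bar S_0$ for all $m$: it oscillates with the parity of $m$ between the bottom three towers and (a copy of) the top three towers, exactly mirroring the fact that the bottom $\beta,\gamma$ and $\alpha$ values are $\delta(Y)$, while the top ones see $\delta_+(K)$.

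\textbf{The even case.} When $m$ is even I would argue that $\check A^s_m$ and $\check A^s_0$ have the same image modulo $Q$-torsion phenomena — more precisely, that $\check A^s_m$ surjects onto the bottom $\alpha,\beta,\gamma$-towers of $\bar S_0$ and that the full $\Rin$-module $\check S_{1/m}(K)$ therefore has its $\alpha$ and $\beta$-towers determined by the bottom towers. By Lemma \ref{zeros} this forces $\alpha(Y')=\beta(Y')=\delta(Y)$. The $\gamma$-tower of $Y'$, on the other hand, is what survives after quotienting by the image of the map coming back from $\check S_{1/(m+1)}$; the element realizing it is exactly the lowest element of the $\gamma$-tower not in $\operatorname{Im}Q$, which by Lemma \ref{dp1} (applied with $\delta(Y')=\delta_+(K)$, since $\mu(Y')=\mu(Y)$ when $m$ is even and $\delta(Y')$ is then forced to be $\delta_+(K)$ by the $b_1$-surgery formula) equals $\delta_+(K)$ or $\delta_+(K)-1$ according to the parity condition $\delta_+(K)\equiv\delta(Y)\bmod 2$. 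This gives the stated formula; note the formula is phrased as a computation of $\delta(Y)$ in terms of $\delta_+(K)$, which is just this relation read the other way.

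\textbf{The odd case.} When $m$ is odd the picture of Lemma \ref{maps} is the second one, so $\operatorname{Im}\check A^s_m$ now matches the \emph{top} towers of $\bar S_0$ after the change of basis; chasing the \textit{to}-sequence then identifies the $\alpha,\beta,\gamma$-towers of $\check S_{1/m}(K)$ with $i_*(\bar S_0)/(\operatorname{Im}\check A^s_0\cap i_*(\bar S_0))$ up to the normalizing grading shifts built into the definitions of $\alpha_+,\beta_+,\gamma_+$. That is precisely $\alpha(Y')=\alpha_+(K)$, etc. The main obstacle I anticipate is the non-canonicity of the identification \eqref{arf0}: one must check that the quantities $\alpha_+,\beta_+,\gamma_+$ extracted from $i_*(\bar S_0)/(\operatorname{Im}\check A^s_0\cap i_*(\bar S_0))$ are independent of the chosen basis (the Remark shows the individual towers are not, but the relevant quotient should be), and that Lemma \ref{maps} genuinely controls the parity dependence rather than merely the two local pictures — i.e.\ that the ``bend-right'' connecting maps in the second picture are $V$-power-series with unit leading term, so that the induction does not accumulate extra torsion. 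Once that bookkeeping is in place, the inductive step is formal: apply Lemma \ref{usualkey}-style reasoning (now with $Q$ in place of $0$) to conclude that odd and even surgeries reproduce the top and bottom data respectively, stably in $m$.
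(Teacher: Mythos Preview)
Your overall strategy---induction on $m$ via the surgery triangle, with Lemmas \ref{key}, \ref{maps}, \ref{zeros}, and \ref{dp1} as the inputs---is exactly the paper's. But the step you call ``formal'' is where the real content lies, and your proposal misses the essential tool.

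From exactness, $i_*(\bar S_{1/(q+1)})$ is a quotient of $T=i_*(\bar S_0)/\check A^s_q(i_*\bar S_{1/q})$, but it could be a \emph{proper} quotient: there might be elements $y\in\check S_{1/q}$ not lying in $i_*(\bar S_{1/q})$ whose image $\check A^s_q(y)$ lands in a tower of $\check S_0$ and kills further elements of $T$. Ruling this out is the crux of the argument, and it is not bookkeeping. The paper handles it (for $q=1$, the first nontrivial case) by a four-way case split on the local shape of $i_*(\bar S_0)$, and the decisive tool---entirely absent from your proposal---is the \emph{Gysin exact sequence}. One shows via Lemma \ref{key} that such a hypothetical $y$ (or a suitable correction $y+Qw$) satisfies $Q\cdot y=0$, so by Gysin $y=\pi_* z$ for some $z\in\check M_{1/q}$; then compatibility of the Gysin sequence with cobordism maps forces $\check A^m_q(z)$ to be nonzero and outside the bottom $U$-tower of $\check M_0$, contradicting Lemma \ref{usualkey}. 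Your proposal invokes ``Lemma \ref{usualkey}-style reasoning,'' but Lemma \ref{usualkey} is a statement about ordinary monopole Floer homology $\HMt$; without the Gysin bridge there is no mechanism to import it into the $\Pin$-equivariant world, and the argument does not close.

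A minor indexing issue: to compute the correction terms of $\check S_{1/m}$ you examine the cokernel of $\check A^s_{m-1}$, not $\check A^s_m$; so the parity dichotomy in Lemma \ref{maps} is keyed to $q=m-1$, and your assignment of the two pictures to the even and odd cases is shifted by one.
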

\begin{proof}
The statement is invariant under an overall grading shift, so that we can assume without loss of generatily that $\delta(Y)$ is zero. The proof proceeds by induction on the surgery coefficient. As by hypothesis $\check{S}_{\infty}$ is isomorphic (up to grading shift) to $\HSt_{\bullet}(S^3)$, the map $\check{A}^s_0$ is a surjection onto the bottom tower hence the result for $S^3_1(K)$ readily follows by the exact triangle and the definition of the correction terms of the knot $K$.
\par
We want now to compute the correction terms for $Y_{1/2}(K)$. The exact triangle and the module structure implies that $i_*(\bar{S}_{1/2})$ is a quotient $\Rin$-module of
\begin{equation}\label{quoT}
T=i_*(\bar{S}_0)/ \check{A}_1(i_*\bar{S}_1).
\end{equation}
The maps $\bar{A}^s_1$ and $\bar{B}^s_2$ are determined by Lemma \ref{maps}. Notice that because of Lemma \ref{zeros}, the part of $i_*(\bar{S}_0)$ in degrees between $4k-1$ and $4k+2$ for $k\geq 0$ looks like one of the following
\begin{center}
(a)
\begin{tikzpicture}
\matrix (m) [matrix of math nodes,row sep=1em,column sep=0.5em,minimum width=2em]
  { \cdot& \ztwo&&\\
  \ztwo&\ztwo\\
  \ztwo&\ztwo\\
 \ztwo& \\
};

\path[-stealth]
 (m-2-1) edge[bend right] node {}(m-3-1)
 (m-3-1) edge[bend right] node {}(m-4-1)
 (m-1-2) edge[bend left] node {}(m-2-2)
(m-2-2) edge[bend left] node {}(m-3-2)

 ;

\end{tikzpicture}
\qquad
(b)
\begin{tikzpicture}
\matrix (m) [matrix of math nodes,row sep=1em,column sep=0.5em,minimum width=2em]
  { \cdot& \ztwo&&\\
  \ztwo&\ztwo\\
  \ztwo&\ztwo\\
 \cdot& \\
};

\path[-stealth]
 (m-2-1) edge[bend right] node {}(m-3-1)
 (m-1-2) edge[bend left] node {}(m-2-2)
(m-2-2) edge[bend left] node {}(m-3-2)

 ;

\end{tikzpicture}
\qquad
(c)
\begin{tikzpicture}
\matrix (m) [matrix of math nodes,row sep=1em,column sep=0.5em,minimum width=2em]
  { \cdot& \ztwo&&\\
  \ztwo&\ztwo\\
  \ztwo&\\
 \cdot& \\
};
\path[-stealth]
 (m-2-1) edge[bend right] node {}(m-3-1)
  (m-1-2) edge[bend left] node {}(m-2-2)
 (m-2-2) edge[bend left] node {}(m-3-1)
 
 ;

\end{tikzpicture}
\qquad
(d)
\begin{tikzpicture}
\matrix (m) [matrix of math nodes,row sep=1em,column sep=1em,minimum width=2em]
  { \cdot& \ztwo\\
  \ztwo&\\
  \ztwo&\\
 \cdot& \\
};
\path[-stealth]
 (m-1-2) edge[bend left] node {}(m-2-1)
  (m-2-1) edge[bend right] node {}(m-3-1)
 ;
\end{tikzpicture}
\end{center}
In these cases, the corresponding part of $i_*(\bar{S}_1)$ lying in degrees $4k-1$ and $4k+2$ is
\begin{center}
(a)
\begin{tikzpicture}
\matrix (m) [matrix of math nodes,row sep=1em,column sep=0.5em,minimum width=2em]
  { \ztwo&&\\
 \ztwo\\
  \ztwo\\
 \cdot \\
};
\path[-stealth]
 (m-1-1) edge[bend right] node {}(m-2-1)
 (m-2-1) edge[bend right] node {}(m-3-1)
 ;
\end{tikzpicture}
\qquad
(b)
\begin{tikzpicture}
\matrix (m) [matrix of math nodes,row sep=1em,column sep=0.5em,minimum width=2em]
  { \ztwo&&\\
 \ztwo\\
  \ztwo\\
 \cdot \\
};
\path[-stealth]
 (m-1-1) edge[bend right] node {}(m-2-1)
 (m-2-1) edge[bend right] node {}(m-3-1)
 ;
\end{tikzpicture}
\qquad
(c)
\begin{tikzpicture}
\matrix (m) [matrix of math nodes,row sep=1em,column sep=0.5em,minimum width=2em]
  { \ztwo&&\\
 \ztwo\\
  \cdot\\
 \cdot \\
};
\path[-stealth]
 (m-1-1) edge[bend right] node {}(m-2-1)
;
\end{tikzpicture}
\qquad
(d)
\begin{tikzpicture}
\matrix (m) [matrix of math nodes,row sep=1em,column sep=0.5em,minimum width=2em]
  { \ztwo&&\\
 \cdot\\
  \cdot\\
 \cdot \\
};
\end{tikzpicture}

\end{center}
In light of the description of the map $\bar{A}^s_1$, it is then straightforward to check that the module $T$ has in all four cases the shape
\begin{center}
\begin{tikzpicture}
\matrix (m) [matrix of math nodes,row sep=1em,column sep=0.5em,minimum width=2em]
  { \ztwo&&\\
 \ztwo\\
  \ztwo\\
 \cdot \\
};
\path[-stealth]
 (m-1-1) edge[bend right] node {}(m-2-1)
 (m-2-1) edge[bend right] node {}(m-3-1)
 ;

\end{tikzpicture}
\end{center}
in degrees between $4k-1$ and $4k+2$, for $k\geq 0$. In degree less than $1$, the fact that $\check{B}^s_1\circ\check{A}^s_1$ is multiplication by $Q$ implies that the only elements in $i_*\bar{S}_0$ which survive in the quotient are those in the $\gamma$-tower which are not in the image of $Q$. Putting these observations together, we have that if we can identify $i_*\bar{S}_2$ with $T$ then the result holds, where the identification of $\gamma$ with either $\delta$ or $\delta-1$ follows from Lemma \ref{dp1}. Hence, we are left to prove that no element of $T$ is killed by some element of $\check{S}_1$. For the elements in the $\gamma$-tower in degrees below zero, this follows from our observation above: if $\check{A}^s_1(x)$ is in the top $\gamma$-tower of $\check{S}_0$, then Lemma \ref{usualkey} implies that $Qx$ is in the $\gamma$-tower of $\check{S}_1$. In non-negative degrees the argument is slightly different between the cases $(a)$ and $(b)$ and the cases $(c)$ and $(d)$, and we first consider the latter. Suppose $y\in \check{S}_1$ kills the the corresponding element in the bottom $\beta$-tower (which coincides in these cases with the element in the top $\alpha$-tower). Because $\check{B}^s_1$ is zero on the latter, we have by Lemma \ref{key} that $Q\cdot y$ is zero. By the Gysin exact sequence for $Y_1(K)$, $y=\pi_*z$ for some $z\in \check{M}_1$, and furthermore $\check{A}^m_1(z)\neq 0$ because it has to map under $\pi_*$ to the bottom $\beta$-tower. Furthermore, $\check{A}^m_1(z)$ cannot belong to the bottom tower of $\check{M}_0$, so we get a contradiction to Lemma \ref{usualkey}. In the first two cases, suppose that $y\in\check{S}_1$ is an element such that $\check{A}^s_1$ is non-zero in the top $\alpha$-tower. Call the elements in the corresponding section of $i_*\bar{S}_1$ respectively $w$, $Q\cdot w$ and $Q^2w$. Then Lemma \ref{key} implies that $Q\cdot y= Q^2w$. In particular, $Q\cdot(Qw+y)$ is zero, hence $Qw+y$ is in the image of $\pi_*$. Furthermore, as $Qw$ is mapped to the sum of the element in the bottom $\beta$-tower and the element in the top $\alpha$-tower, we have that $\check{A}^s_1(Qw+y)$ is the element in the bottom $\beta$-tower. In case $(a)$, this is a contradiction because the latter element is not in the image of $\pi_*$, while in the case $(b)$ it is a contradiction again because of Lemma \ref{usualkey}.
\par
The proof now continues by induction, after one notices that because of Lemma \ref{maps} the map $\check{A}^s_2$, when restricted to $T$, is a map onto the bottom towers of $\check{S}_0$.
\end{proof}

The following proposition, together with the previous one, implies Theorem \ref{even} and Proposition \ref{odd}.

\begin{prop}
For a knot $K$ with Arf invariant $1$ in an integral $L$-space $Y$, consider for $m>0$ the three-manifold $Y'= Y_{1/m}(K)$. Then if $m$ is odd we have
\begin{equation*}
\alpha(Y)=\alpha_+(K),\quad\beta(Y)=\beta_+(K)
\end{equation*}
while if $m$ is even we have $\alpha(Y)=\beta(Y)=\delta(Y)$ and
\begin{equation*}
\delta(Y)=
\begin{cases}
\delta_+(K)\text{ if }\delta_+(K)\equiv \delta(Y)\mathrm{mod}(2)\\
\delta_+(K)-1\text{ otherwise.}
\end{cases}
\end{equation*}
\end{prop}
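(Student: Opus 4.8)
The plan is to mimic the inductive argument of the preceding proposition, replacing the shape of the \textit{bar} exact triangle for the $\Arf=0$ case with the $\Arf=1$ shape described earlier via identification (\ref{arf1}), and tracking carefully how the parity of $q$ interacts with the four-periodic grading. As in the previous proof we normalise so that $\delta(Y)=0$, and since orientation reversal is built into the statements we may assume $m>0$. By Lemma \ref{zeros} we have $\beta_-(K)=\gamma_-(K)=0$, which pins down the ``bottom'' part of $i_*\bar{S}_0$ (the $\gamma$ and $\beta$ towers of the first two summands of (\ref{arf1})); the top $\alpha$ and $\beta$ towers encode $\alpha_+(K)$ and $\beta_+(K)$, and the key structural input is the remark that in the $\Arf=1$ case the top $\gamma$-tower cancels against the bottom $\alpha$-tower, so there is no surviving bottom $\alpha$-tower or top $\gamma$-tower to worry about. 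The base case $Y_1(K)$ follows immediately from the surgery exact triangle together with the fact that $\check{A}^s_0\colon\check{S}_\infty\to\check{S}_0$ surjects onto the bottom towers, since $\check{S}_\infty\cong\HSt_\bullet(S^3)$ up to shift.

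For the inductive step from $1/q$ to $1/(q+1)$ I would form the quotient module $T=i_*(\bar{S}_0)/\check{A}_q\bigl(i_*\bar{S}_{1/q}\bigr)$ exactly as in (\ref{quoT}), compute its $\Rin$-module structure block by block in each window of four consecutive gradings, and then argue that $i_*\bar{S}_{1/(q+1)}$ is isomorphic to $T$ because no nonzero element of $T$ is killed by an element of $\check{S}_{1/q}$. The computation of $T$ splits according to whether $q$ is even or odd: for $q$ even the map $\bar{A}^s_q$ hits the bottom $\gamma,\beta$ towers of $\bar{S}_0$, while for $q$ odd it hits the top $\alpha,\beta$ towers, and this is precisely what produces the two different behaviours ($\alpha,\beta$ stable for odd $m$, versus $\alpha=\beta=\delta(Y)$ for even $m$ with $\gamma$ governed by the parity of $\delta_+(K)$ via Lemma \ref{dp1}). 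The non-vanishing of elements of $T$ in $i_*\bar{S}_{1/(q+1)}$ is established by the same mechanism as before: if $y\in\check{S}_{1/q}$ killed a surviving class, then using Lemma \ref{key} (the composite $\check{B}^s_q\circ\check{A}^s_q$ is multiplication by $Q$) one shows $Q\cdot y$, or $Q\cdot(y+\text{correction})$, is zero, so $y$ lifts through the Gysin sequence to some $z\in\check{M}_{1/q}$, and then $\check{A}^m_q(z)$ would have to land outside the bottom $U$-tower of $\check{M}_0$, contradicting Lemma \ref{usualkey}. One then observes, as in the $\Arf=0$ case, that $\check{A}^s_{q+1}$ restricted to $T$ is onto the appropriate towers of $\check{S}_0$, so the induction can continue.

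The main obstacle, and the reason the statement is weaker here than in the $\Arf=0$ case, is the asymmetry of the $\Arf=1$ triangle under $q\mapsto q+1$: because the Rokhlin invariant changes under odd surgery on an $\Arf=1$ knot, the image of $\bar{A}^s_q$ genuinely alternates between the bottom $\{\gamma,\beta\}$ and the top $\{\alpha,\beta\}$ towers with the parity of $q$, so the two-step composite (rather than a single step) is what is actually periodic. This means that for $m>0$ the argument controls $\alpha$ and $\beta$ (one always gains one ``good'' step in every pair), but the $\gamma$-tower of $\check{S}_{1/m}$ for $m$ even is only reached through the bottom $\gamma$-tower in a way one can pin down, while for $m$ odd the $\gamma$-invariant would require understanding the quotient by the image of $\bar{A}^s_q$ on the top $\gamma$-tower — which does not survive because of the cancellation noted above — and the present techniques do not resolve this; this is exactly the gap flagged in the discussion following Proposition \ref{odd}. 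I expect the bookkeeping of gradings modulo four in the even/odd split to be the most delicate routine part, and the genuine conceptual point to be isolating why only $\alpha,\beta$ (and not $\gamma$) can be extracted for odd $m$.
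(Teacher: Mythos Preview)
Your proposal is correct and follows essentially the same approach as the paper: the paper's own proof is a two-sentence sketch saying ``same as above,'' with the key observation that in the $\Arf=1$ case the $\gamma$-tower is not determined by the zero surgery for odd $m$, while for even $m$ the elements of the $\gamma$-tower below degree zero are not in the image of $Q$ (by Lemma \ref{key}), so $\gamma$ is again pinned down by $\delta$ via Lemma \ref{dp1}. You have identified exactly these points, and your writeup is in fact considerably more detailed than the paper's.
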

\begin{proof}
The proof of this result is the same as the one above, the main difference being that in the case of a knot with Arf invariant one, because of the shape of the triangle, the $\gamma$-tower is not determined by the zero surgery. The main observation is that even though in the odd case we cannot determine exactly the $\gamma$-tower, in the even case the elements of the $\gamma$-tower of $Y_{1/m}(K)$ in degree less than zero are (as in the Arf zero case) not in the image of $Q$ by Lemma \ref{key}, so that $\gamma$ is again determined by $\delta$ by Lemma \ref{dp1}.
\end{proof}
\vspace{0.3cm}
\begin{proof}[Proof of Corollary \ref{whitehead}]
It is shown in \cite{MO} that the branched double cover of $Wh(K)$ is the homology sphere obtained by $(1/2)$-surgery on $K\hash K^r$, $K^r$ being the knot with string orientation reversed. The result then follows from Theorem \ref{even}.
\end{proof}
\vspace{0.3cm}
The following weaker version of Theorem \ref{even} also holds more in general.
\begin{prop}
Suppose $Y$ is a homology sphere with $\alpha(Y)=\beta(Y)=\gamma(Y)=\delta(Y)$. If $Y'$ is obtained from $Y$ by even surgery, then $\beta(Y')=\delta(Y)$.
\end{prop}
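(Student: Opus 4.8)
The plan is to re-run the proofs of the two preceding propositions, keeping track only of the $\beta$-tower, and to check that the integral $L$-space hypothesis entered those arguments at exactly two points, both of which survive under the weaker assumption $\alpha(Y)=\beta(Y)=\gamma(Y)=\delta(Y)$.

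First I would dispose of negative surgeries. Under orientation reversal each of $\alpha(Y)$, $\beta(Y)$, $\gamma(Y)$, $\delta(Y)$ changes sign, so the hypothesis is preserved by passing to $\bar{Y}$; since $\overline{Y'}$ is obtained from $\bar{Y}$ by $1/(-m)$-surgery on the mirror of the knot, and since $\beta(\overline{Y'})=-\beta(Y')$ while $\delta(\bar{Y})=-\delta(Y)$, the negative even case reduces to the positive even case. From now on I assume $m>0$, and after an overall grading shift that $\delta(Y)=0$.

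Next I would isolate the one substantive input, the analogue of Lemma \ref{zeros}, namely $\beta_-(K)=\gamma_-(K)=\delta(Y)$. The proof of Lemma \ref{zeros} uses nothing about $Y$ except that the $\alpha$-, $\beta$- and $\gamma$-towers of $\check{S}_{\infty}=\HSt_{\bullet}(Y)$ sit in gradings $2\alpha(Y)$, $2\beta(Y)+1$, $2\gamma(Y)+2$, together with the bar-level shapes of $\check{A}_0$ and $\check{B}_0$, which depend only on $\Arf(K)$; under our hypothesis those gradings are $0$, $1$, $2$, exactly as for an integral $L$-space with $\delta=0$, so the sandwiching argument goes through word for word. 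I would then note that $i_*\HSb_{\bullet}(Y)$ is isomorphic as a graded $\Rin$-module to $i_*\HSb_{\bullet}(S^3)$ — its decomposition into three towers being dictated by the now-equal correction terms — so that $\check{A}^s_0$, restricted to the image of $i_*$, is identified with the corresponding map for an $L$-space and therefore surjects onto the bottom towers of $i_*\bar{S}_0$; this is precisely what the base case of the induction (the computation for $Y_1(K)$) requires. Finally I would record that Lemmas \ref{key}, \ref{usualkey} and \ref{maps} hold verbatim for a knot in an arbitrary homology sphere, their proofs being cobordism-theoretic and depending only on the vanishing of $\check{B}^m_q\circ\check{A}^m_q$.

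With all this in hand the induction of the two preceding propositions runs as before: at each stage $i_*\bar{S}_{1/(q+1)}$ is a quotient of $i_*\bar{S}_0/\check{A}_q(i_*\bar{S}_{1/q})$, and for $q$ odd this module has its $\beta$-tower in grading $1$ with nothing below it, whence $\beta(Y_{1/(q+1)}(K))=0=\delta(Y)$ once the induction confirms $i_*\bar{S}_{1/(q+1)}$ agrees with it there; specialising to $q+1=m$ completes the proof. I would make no claim about $\alpha(Y')$ or $\gamma(Y')$: the former would require controlling the bottom $\alpha$-tower of $\check{S}_0$, the latter the actual value of $\delta(Y')$, and neither is available without the full $L$-space hypothesis — which is exactly why only the $\beta$-statement is stated. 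The step I expect to be the real work is the inductive verification that no element of the $\beta$-tower of $i_*\bar{S}_0/\check{A}_q(i_*\bar{S}_{1/q})$ is killed by torsion flowing in from $\check{S}_{1/q}$, which can be larger than in the $L$-space case; I expect this to survive untouched, since the obstruction to such a cancellation is detected through the $Q$-action by Lemma \ref{key} and through ordinary monopole Floer homology by Lemma \ref{usualkey}, both of which are insensitive to whether $Y$ is an $L$-space.
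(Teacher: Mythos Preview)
Your approach is genuinely different from the paper's, and worth contrasting. The paper does \emph{not} re-run the induction at all. Instead it observes (via a slam-dunk move, see Figure~\ref{composition}) that for any even surgery there is a spin cobordism $W$ from $Y$ to $Y'$ with $b_2^+(W)=b_2^-(W)=1$. Theorem~5 of \cite{Lin2} then gives $\beta(Y')\geq\gamma(Y)=\delta(Y)$; turning the cobordism around and reversing orientation gives another spin cobordism with the same Betti numbers from $Y'$ to $Y$, and the same theorem yields $\delta(Y)=\alpha(Y)\geq\beta(Y')$. That is the entire argument: two inequalities, no exact triangles, no Arf-invariant case split, no induction on $m$.

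What each approach buys: the paper's argument is dramatically shorter and makes transparent exactly which part of the hypothesis is used (only $\alpha(Y)=\gamma(Y)$, not the full equality with $\delta$, and nothing at all about the Floer homology beyond the correction terms). Your approach, if it goes through, would in principle yield more --- it tracks the full module structure of $i_*\bar{S}_{1/m}$ --- but you yourself flag the weak point: the ``no cancellation from torsion'' step in the induction is the place where the $L$-space hypothesis did real work in the original argument (it controlled all of $\check{S}_{1/q}$, not just $i_*\bar{S}_{1/q}$), and you have not verified that Lemmas~\ref{key} and~\ref{usualkey} alone suffice to rule out interference from the now-uncontrolled reduced Floer homology of $Y$ propagating through the triangle. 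So as written your proposal is a sketch with an honest gap, whereas the paper's cobordism argument sidesteps the issue entirely.
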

\begin{proof}
As shown in Figure \ref{composition} there is a spin cobordism $W$ from $Y$ to $Y'$ with $b_2^+(W)=b_2^-(W)=1$. By Theorem $5$ in \cite{Lin2} we have the inequality
\begin{equation*}
\beta(Y')\geq \gamma(Y)=\delta(Y).
\end{equation*}
On the other hand, the reversed cobordism with the reversed orientation is still spin with $b_2^+(W)=b_2^-(W)=1$, so that again by Theorem $5$ in \cite{Lin2}
\begin{equation*}
\delta(Y)=\alpha(Y)\geq \beta(Y').
\end{equation*}
and the result follows.
\end{proof}

\begin{figure}
  \centering
\def\svgwidth{0.7\textwidth}
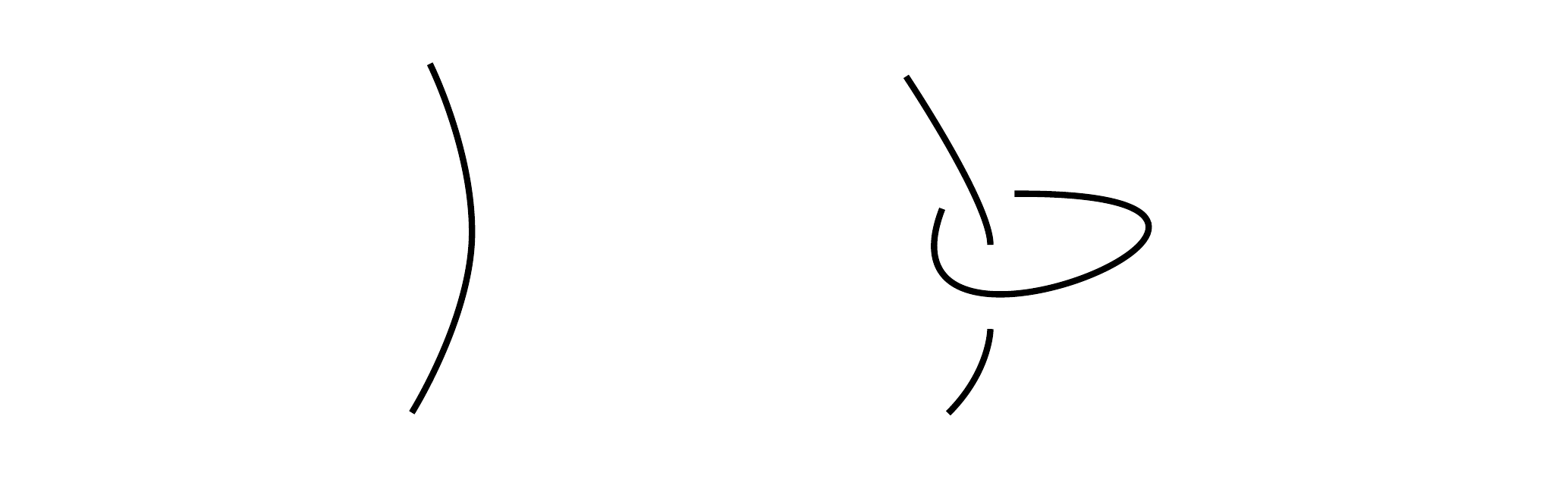
\caption{The boundary of these surgery diagrams are equivalent under a slam dunk move (see \cite{GS}). The diagram on the right describes a spin cobordism from $Y$ to $Y'$ with $b_2^+(W)=b_2^-(W)=1$.}
    \label{composition}
\end{figure} 
\vspace{0.3cm}

\begin{proof}[Proof of Theorem \ref{order2}]
The statement is clear if the surgery coefficient is even in light of Theorem \ref{even}. For the odd case, the key observation is that
\begin{equation}\label{simm}
\beta(S^3_{-1/n}(K))=\beta(\overline{S^3_{1/n}(\bar{K})})=-\beta(S^3_{1/n}(\bar{K}))=-\beta(S^3_{1/n}({K})).
\end{equation}
Here $\bar{K}$ denotes the mirror of $K$ and the first equality follows from the fact that the inputs are naturally identified, the second from the properties of $\beta$ and the third from the fact that, as $K$ has concordance order two, $\bar{K}$ is concordant to $K$, so that the corresponding surgeries are homology cobordant. Because the statement is invariant under orientation reversal, and in light of Proposition \ref{odd}, we can suppose that $Y$ is obtained by $+1$ surgery on a knot $K$. Suppose now that $\delta(Y)$ and $\beta(Y)$ has opposite signs, so that $\delta(Y)<0$ (because the surgery is positive) and $\beta(Y)>0$. Then $\beta_+(K)=\beta(Y)>0$. The module structure then implies that $\alpha_+(K)>0$. Now the symmetry (\ref{simm}) implies that $\beta(S^3_{-1}(K))<0$. On the other hand the bottom element of the $\beta$-tower of $S^3_{-1}(K)$ maps via $\check{A}^s_{-1}$ injectively into the top $\alpha$-tower of $S^3_0(K)$ (see again Lemma \ref{maps}), implying $\alpha_+(K)<0$, contradiction.
\end{proof}

\vspace{0.5cm}
\section{Some examples}\label{examples}
In this section we discuss some examples of homology spheres which turn to be handy when making statements of the form ''\textit{there exist homology spheres which are not homology cobordant to any homology sphere of the form etc.}''. Our main result, which is essentially a computation with the Eilenberg-Moore-spectral sequence \cite{Lin4}, is the following (cf. \cite{Sto2} for analogous results in the setting of $\Pin$-equivariant Seiberg-Witten Floer homology).
\begin{prop}\label{all}
For any $a\geq b\geq c$ integers with the same parity with $a-b\geq b- c$, there exists a homology sphere $Y$such that
\begin{equation*}
\alpha(Y)=\delta(Y)=a,\quad \beta(Y)=b,\quad \gamma(Y)=c.
\end{equation*}
\end{prop}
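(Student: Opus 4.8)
The plan is to realize the target correction terms by taking a connected sum of well-chosen building blocks and computing the $\Pin$-Floer homology of the sum via the Eilenberg-Moore spectral sequence of \cite{Lin4}. First I would set up the basic building blocks: on one hand, Brieskorn spheres (or surgeries on torus knots) that produce a controlled "gap" between the three towers — for instance the examples appearing in the Remark above, whose $\check{S}_0$-type modules show that one can get $\alpha-\beta$ and $\beta-\gamma$ each equal to a prescribed amount in a single summand. On the other hand, $L$-spaces (connected sums of Poincar\'e spheres with either orientation) contribute a clean overall grading shift by $2\delta$ while keeping $\alpha=\beta=\gamma=\delta$, by Lemma 1. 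So the strategy is: pick one block $Y_0$ with $\alpha(Y_0)=\delta(Y_0)=0$, $\beta(Y_0)=b-a$, $\gamma(Y_0)=c-a$ (this is where the constraint $a-b\geq b-c$ and the equal-parity condition must be exactly what is achievable), then set $Y=Y_0\hash(\text{$L$-space with }\delta=a)$, using that all correction terms are additive under the grading shift coming from the $L$-space summand.

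The key step is constructing the block $Y_0$ with $\alpha=\delta=0$ and prescribed $\beta\leq 0$, $\gamma\leq 0$ satisfying $|\gamma|\geq|\beta|$ and all three the same parity (the parity of $Y_0$'s Rokhlin invariant forces $\beta\equiv\gamma\pmod 2$, and $\alpha=\delta=0$ forces them even here; the general equal-parity case is then handled by allowing the $L$-space summand to be a Poincar\'e sphere, which shifts all four invariants by the same odd-looking amount — actually the parities track $\mu$, so I should be careful and instead realize the odd-parity case with a different base block whose $\mu=1$). Concretely I would take iterated surgeries: start from a knot in $S^3$ (e.g. a torus knot $T(2,2n+1)$, or a suitable connected sum of such) and perform an appropriate $\pm1$-surgery, reading off $\alpha_+,\beta_+,\gamma_+$ of the zero-surgery from the known Heegaard/monopole Floer homology via the isomorphism \cite{HFHM1}, \cite{CGH}, then using the Propositions proved above (the odd-$m$ cases) to convert those into the correction terms of the surgered homology sphere. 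The torus knot $T(2,2n+1)$ gives $\gamma-\beta$ growing with $n$ while $\alpha-\beta$ stays bounded, and connected sums let me tune both independently subject to $a-b\geq b-c$.

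The main obstacle I anticipate is the Eilenberg-Moore / Tor computation for the connected sum: while Lemma 1 and the properties of $t(Y)$ tell us the coarse $\ztwo[[U]]$-module structure behaves well under $\hash$, the $Q$-action — hence the precise splitting of $i_*(\bar S_{\bullet})$ into $\alpha$-, $\beta$-, $\gamma$-towers and thus the exact values of the three correction terms — is governed by the full $\Rin$-module structure, and the spectral sequence $\mathrm{Tor}^{\Rin}_{*,*}(\HSf_\bullet(Y),\HSf_\bullet(Y'))$ need not degenerate trivially. I would handle this by choosing the summands so that one of them is an $L$-space (so its $\HSf_\bullet$ is free over $\Rin$ up to shift, making the Tor collapse to a single column and the computation reduce to a grading shift), which is exactly why the decomposition $Y=Y_0\hash(\text{$L$-space})$ is forced. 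The remaining genuinely nontrivial computation is then just for $Y_0$ itself, a surgery on a torus knot (or small connected sum thereof), where the answer is already essentially recorded in \cite{Lin2} and the Remark above. Finally I would check the boundary/extremal cases ($a=b=c$, recovering Lemma 1, and $a-b=b-c$) separately to make sure the inequality in the hypothesis is sharp and that no parity obstruction is violated.
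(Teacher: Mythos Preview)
Your overall architecture matches the paper: build a block $Y_0$ with $\alpha=\delta=0$ and the desired gaps, then shift by connect-summing with copies of the Poincar\'e sphere. You are also right that an $L$-space summand makes the Eilenberg--Moore spectral sequence collapse, so that step is just a grading shift.

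The gap is in your construction of $Y_0$. The single-surgery blocks you propose (surgeries on $T(2,2n+1)$, or more generally on $L$-space knots) always have two of the three correction terms equal: for the $(-1)$-surgeries computed in \cite{Lin2} and in the Example above one gets $\alpha=\delta=0$ and $\beta=\gamma=-2k$, so $\beta-\gamma=0$ while $\alpha-\beta$ grows with $k$ (you have this reversed in your sketch). To separate all three invariants you are therefore forced to take a connected sum of two \emph{non}-$L$-space blocks, say $Y_k\hash Y_{k'}$, and at that point the Tor computation over $\Rin$ is unavoidable. You explicitly try to sidestep this (``choose the summands so that one of them is an $L$-space''), but that strategy can only produce $Y_0$'s with $\beta=\gamma$, hence only the degenerate case $b=c$ of the Proposition.

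The paper's proof confronts this head-on: it writes down a short free $\Rin$-resolution of the module $M_k^*$, computes $\mathrm{Tor}^{\Rin}_{*,*}(M_k^*,M_{k'}^*)$ explicitly, checks that nothing from $\mathrm{Tor}_2$ can interfere with the towers for grading reasons, and reads off $\alpha=\delta=0$, $\beta=-2k$, $\gamma=-2(k+k')$ for $Y_k\hash Y_{k'}$ (this is Proposition~\ref{sum}). That computation---not anything already in \cite{Lin2} or the Remark about zero-surgeries---is the heart of the argument, and it is exactly what your proposal is missing.
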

The more general geography problem seems to be much harder. For example, the author is not aware of any homology sphere for which $\delta\not\in\{\alpha,\alpha-1,\gamma+1,\gamma\}$, or a homology sphere for which $\alpha=\delta$ and $\alpha-\beta<<\beta-\gamma$. While one can easily construct candidate chain complexes with any given numerical invariants (satisfying the obvious inequalities $\alpha\geq \beta,\delta\geq \gamma$), it is not clear which of these can arise as the Floer chain complex of a homology sphere.
\\
\par
Consider the $\ztwo[[V]]$-module $\V_d(k)$ given by $\ztwo[[V]]/V^{k}\langle2k+d\rangle$. The degrees are shifted so that the element with minimum degree has degree $d$. We define the $\Rin$-module $M_k$ as
\begin{equation}\label{mk}
\V^+_0\oplus \V^+_{-4k+1}\oplus  \V^+_{-4k+2}\oplus \V_{-4k+3}(k)
\end{equation}
where the action of $Q$ is surjective from the second summand to the first, is an isomorphism from the third summand to the second and is injective from the fourth summand to the third. More graphically, this module is
\begin{center}
\begin{tikzpicture}
\matrix (m) [matrix of math nodes,row sep=0.1em,column sep=0.7em,minimum width=0.1em]
  {   \ztwo & \ztwo &\cdot&& \cdots &\cdots& \ztwo & \ztwo & \cdot & \ztwo_0 & \ztwo &\ztwo&\cdots \\
        &  & \ztwo & &  &&&  & \ztwo &  &  & &\\
};
\path[-stealth]
(m-2-3) edge[bend left] node {}(m-1-2)
(m-1-5) edge[bend right] node {}(m-1-1)
(m-1-6) edge[bend right] node {}(m-1-2) 
(m-1-2) edge[bend left] node {}(m-1-1)

(m-1-8) edge[bend left] node {}(m-1-7)
(m-1-11) edge[bend left] node {}(m-1-10)
(m-1-12) edge[bend left] node {}(m-1-11)
(m-2-9) edge[bend left] node {}(m-1-8)
(m-1-12) edge[bend right] node {}(m-1-8)
(m-1-11) edge[bend right] node {}(m-1-7)
;
\draw[decoration={brace,mirror,raise=-4pt,amplitude=15pt},decorate]
  (-5,-0.5) -- node[below=6pt] {$k$ copies} (2,-0.5);
\end{tikzpicture}
\end{center}
Here the bottom line represents the $\V_{-4k+3}(k)$ summand, hence has dimension $k$.
\begin{defn}
We say that a homology sphere $Y$ has \textit{simple type} $M_{k}$ if there is a decomposition as a direct sum of $\Rin$-modules
\begin{equation*}
\HSt_{\bullet}(Y)=M_k \oplus J
\end{equation*}
and there are not non-trivial Massey products between the two summands.
\end{defn}
The submodule $i_*(\HSb_{\bullet}(Y))$ consists of the three $\V^+$ summands in $M_k$. In particular we have that if $Y$ has simple type $M_{k}$ then
\begin{equation*}
\alpha(Y)=\delta(Y)=0\qquad\text{and}\qquad \beta(Y)=\gamma(Y)=-2k.
\end{equation*}
\begin{example}
Let $K_k$ be the torus knot $T(2, 8k-1)$. Then $Y_k=S^3_{-1}(K_k)$ has simple type $M_k$. To see this, recall the computations for alternating knots provided by \cite{Lin2} tell us that in the torsion spin$^c$ structure
\begin{equation*}
\HSt_{\bullet}(S^3_0(K_k),\spin_0)=\left(\V_{-1}\oplus \V_0\oplus \V_1\right)\oplus \left(\V_{-4n}\oplus \V_{-4n+1}\oplus \V_{-4n+2}\right)\oplus\ztwo^{m_k}\langle -4n\rangle
\end{equation*}
for some $m_k$ depending on $k$. Given the description of $\bar{A}^s_{-1}$ from Lemma \ref{maps}, the exact triangle implies a decomposition of the form (\ref{mk}). The fact that there are not non-trivial Massey products follows from the fact that in this case the extra summand comes for the other spin$^c$ structures on $S^3_0(K_k)$ and functoriality. This example readily generalizes for $(1/m)$-surgeries on $L$-space knots of Arf invariant $0$, with $m$ odd and negative.
\end{example}

The following is the key computation.
\begin{prop}\label{sum}
Suppose $Y$ and $Y'$ have simple type $M_k$ and $M_{k'}$  for $k'\leq k$ respectively. Then we have
\begin{equation*}
\alpha(Y\hash Y')=\delta(Y\hash Y')=0,\quad \beta(Y\hash Y')=-2k,\quad \gamma(Y\hash Y')=-2(k+k').
\end{equation*}
\end{prop}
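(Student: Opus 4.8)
The plan is to compute $\HSt_{\bullet}(Y\hash Y')$ directly from the connected sum formula for $\Pin$-monopole Floer homology and then read off the correction terms. First I would use the Künneth-type formula for the \emph{from} version (the Eilenberg-Moore spectral sequence of \cite{Lin4}): since $Y$ and $Y'$ have simple type, $\HSf_{\bullet}(Y)$ and $\HSf_{\bullet}(Y')$ are explicit $\Rin$-modules, namely (up to grading shift) $\Rin\langle -1\rangle$ plus the torsion contributions coming from the $\V_{-4k+3}(k)$ summand of $M_k$ and from $J$. Because of the hypothesis that there are no non-trivial Massey products between $M_k$ and the complementary summand $J$, the spectral sequence degenerates on the relevant part, and the behaviour of $i_*(\HSb_{\bullet})$ in the connected sum is governed entirely by the interaction of the two copies of $M_k$ and $M_{k'}$. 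This reduces the problem to a purely algebraic computation of $\HSt_{\bullet}(Y\hash Y')$ in terms of $M_k$ and $M_{k'}$.

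Next I would carry out that algebraic computation. The $\V^+$-towers of $M_k\oplus_{\text{graded}} M_{k'}$ produce, after passing to the sum, the three $\V^+$-towers of the connected sum shifted by $\delta(Y)+\delta(Y')=0$; the point is to track which elements of the $\gamma$, $\beta$, $\alpha$-towers survive in $i_*(\HSb_{\bullet}(Y\hash Y'))$, i.e. are hit from the \emph{bar} group. The $\alpha$-tower and the $\delta$-invariant are unchanged (both remain $0$) because $i_*$ is an isomorphism in high degrees and the top tower is never obstructed. For $\beta$, the torsion of one factor can kill part of the $\beta$-tower down to degree $-4k$ but no further, since the torsion length available is $\max\{k,k'\}=k$ (using $k'\le k$); this gives $\beta(Y\hash Y')=-2k$. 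For $\gamma$, I expect the two torsion contributions to \emph{add}: the $Q$-action relates the $\gamma$-tower to the $\beta$-tower, and the composite torsion from both $\V_{-4k+3}(k)$ and $\V_{-4k'+3}(k')$ summands pushes the bottom of the surviving $\gamma$-tower down by $4k+4k'$, yielding $\gamma(Y\hash Y')=-2(k+k')$. Concretely I would write down a small chain complex for $M_k\otimes M_{k'}$ over $\Rin$, identify the $V$- and $Q$-equivariant homology, and locate the three generators of the $\V^+$-towers modulo $Q$-image.

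The main obstacle I anticipate is the bookkeeping in the Eilenberg-Moore / Künneth step: one must be careful that the $\Tor$ over $\ztwo[[V]][Q]/(Q^3)$ (as opposed to over $\ztwo[[U]]$, where the analogous computation in Section \ref{tors} was straightforward) does not introduce extra towers or unexpected $Q$-actions, and that the hypothesis on Massey products is genuinely enough to suppress all higher differentials on the $M_k\oplus J$ part. In particular one needs that $\Tor^{\Rin}$ of the free summands $\Rin\langle -1\rangle$ behaves like a single free summand in the connected sum (so that $\HSf_{\bullet}(Y\hash Y')$ again has exactly one $\Rin$ free part up to grading shift), and that the torsion part of $M_k\otimes_{\Rin} M_{k'}$ contributes only to lowering the $\beta$- and $\gamma$-towers by the amounts claimed; once this is verified, everything else is a direct reading-off of minimal gradings modulo $\mathrm{Im}\,Q$ exactly as in Section \ref{review}.
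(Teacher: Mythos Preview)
Your overall strategy matches the paper's: pass to the \emph{from} groups via Poincar\'e duality, apply the Eilenberg--Moore spectral sequence of \cite{Lin4} to $\bar{Y}\hash\bar{Y}'$, and use the simple-type hypothesis (no Massey products between $M_k$ and $J$) to reduce to a purely algebraic $\Tor^{\Rin}$ computation involving only $M_k^*$ and $M_{k'}^*$. So the route is right.

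What is missing is the one concrete ingredient that makes the computation go through, and this is exactly the ``bookkeeping'' obstacle you flag at the end. The paper resolves it by observing that $M_k^*$ admits a \emph{two-step} projective resolution over $\Rin$,
\[
M_k^*\ \longleftarrow\ \Rin[1-4k]\oplus\Rin[0]\ \longleftarrow\ \Rin[4k],
\]
with the right-hand map $1\mapsto(Q,V^k)$. Because the resolution has length two, the Eilenberg--Moore spectral sequence collapses at $E^2$ for degree reasons: there is simply no room for higher differentials. One then computes $\Tor^{\Rin}_{*,1}(M_k^*,M_{k'}^*)=M_k^*\otimes_{\Rin}M_{k'}^*$ and $\Tor^{\Rin}_{*,2}$ explicitly; the former contains a summand of the form
\[
\ztwo[[V]]\langle0\rangle\oplus\ztwo[[V]]\langle-4k'+1\rangle\oplus\ztwo[[V]]\langle-4(k+k')+2\rangle\oplus\V_{3-4(k+k')}(k+k')\oplus\V_{2-4k'}(k')
\]
(with the obvious $Q$-action), and a grading check shows that nothing from $\Tor^{\Rin}_{*,2}$ can contribute to the towers. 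The values $\alpha=\delta=0$, $\beta=-2k$, $\gamma=-2(k+k')$ are then read off directly from this explicit module.

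Your heuristic for $\beta$ and $\gamma$ (``torsion length $\max\{k,k'\}$ kills $\beta$ down to $-4k$'', ``the two torsion contributions add for $\gamma$'') has the right answers but is not a proof; in particular, without the short resolution you have no control over higher differentials in the spectral sequence, and your phrasing in terms of ``$M_k\oplus_{\text{graded}}M_{k'}$'' suggests you have not yet pinned down that the relevant object is $\Tor^{\Rin}_{*,*}(M_k^*,M_{k'}^*)$ rather than any kind of direct sum. Once you write down that resolution, everything else is exactly the ``direct reading-off'' you describe.
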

\begin{proof}
In order to study the Eilenberg-Moore spectral sequence we need to deal with the \textit{from} groups. Poincar\'e duality tells us that up to grading shift and reversal
\begin{equation*}
\HSt^{\bullet}(Y)\cong\HSf_{\bullet}(\bar{Y}),
\end{equation*}
hence we can apply the result on connected sum for $\bar{Y}$ and $\bar{Y}'$ to the dual $\Rin$-modules $M^*_k\oplus J^*$ and $M^*_{k'}\oplus (J')^*$. For simplicity, we can suppose that the gradings in the latter are shifted so that the top degree elements of $M^*_k$ and $M^*_{k'}$ are zero: the final gradings will be straightforward to infer as by additivity we have $\delta(Y\hash Y')=0$, so we only need to focus on the differences. The key point is that while in general the $E^2$-page of the Eilenberg-Moore spectral sequence is rather complicated, the module $M_k^*$ has a two-step projective resolution
\begin{equation*}
M_k^*\leftarrow \Rin[1-4k]\oplus \Rin[0]\leftarrow \Rin[4k]
\end{equation*}
where the map on the right sends $1$ to $(Q,V^k)$. The hypothesis that the summands $J$ and $J'$ do not have Massey products into the towers imply that in order to study the correction terms we just need to understand $\mathrm{Tor}^{\Rin}_{*,*}(M^*_k,M^*_{k'})$. As the projective resolution has length two, there are no higher differentials in the spectral sequence, so that it collapses at the $E^2$-page. Recall that we have the identification
\begin{equation*}
\mathrm{Tor}^{\Rin}_{*,1}(M^*_k,M^*_{k'})=M^*_k\otimes_{\Rin}M^*_{k'}.
\end{equation*}
The latter can be written as a direct sum of two $\Rin$-modules, one of which is
\begin{equation}\label{towersum}
\ztwo[[V]]\langle0\rangle\oplus\ztwo[[V]]\langle-4k'+1\rangle\oplus\ztwo[[V]]\langle-4(k+k')+2\rangle\oplus\V_{3-4(k+k')}(k+k')\oplus\V_{2-4k'}(k')
\end{equation}
where the action of $Q$ is not trivial from one summand to the one next to it on the right when there are two $\ztwo$ summands that differ in degree by one, and the other is $\V_{5-4(k+k')}(k')$. The group $\mathrm{Tor}^{\Rin}_{*,2}$ is isomorphic to $\V_{2-4(k+k')}(k')$. In the simplest case where $k=k'=1$, $\mathrm{Tor}^{\Rin}_{*,*}$ can be graphically described as follows:
\begin{center}
\begin{tikzpicture}
\matrix (m) [matrix of math nodes,row sep=0.1em,column sep=1em,minimum width=0.1em]
  {    & \ztwo &\cdot&\cdot&  & \ztwo & \ztwo & \cdot & \ztwo & \ztwo &\ztwo&\cdots \\
        &  & \ztwo &\ztwo &  &\oplus&  & \ztwo &  &  & &\\
       {} & & & & & \ztwo&&&&{}&&{}\\
        &&&&&&&&\ztwo\\
};
\path[-stealth]
(m-1-6) edge[bend right] node {}(m-1-7)
(m-1-9) edge[bend right] node {}(m-1-10)
(m-1-10) edge[bend right] node {}(m-1-11)
(m-1-2) edge[bend right] node {}(m-2-3)
(m-1-7) edge[bend right] node {}(m-2-8)
(m-1-7) edge[bend left] node {}(m-1-11)
(m-1-6) edge[bend left] node {}(m-1-10)
(m-1-2) edge[bend left] node {}(m-1-7)      
(m-2-3) edge[bend right] node {}(m-2-4);
\draw(m-3-1.south west) edge (m-3-12.south east);
\end{tikzpicture}
\end{center}
Here the first three rows represent $\mathrm{Tor}^{\Rin}_{*,1}$ (where the first two are the summand (\ref{towersum})) while the forth row represent $\mathrm{Tor}^{\Rin}_{*,2}$. In the general case, the third row will be a summand $\V_{5-4(k+k')}(k')$ and the forth row $\V_{2-4(k+k')}(k')$. In particular, there cannot be non trivial elements in the towers coming from $\mathrm{Tor}^{\Rin}_{*,2}$ for grading reasons, so that the correction terms are determined by $\mathrm{Tor}^{\Rin}_{*,1}$. The result then follows by Poincar\'e duality.
\end{proof}

\begin{proof}[Proof of Proposition \ref{all}]
Because the Poincar\'e homology sphere $P$ is an $L$-space (because of positive scalar curvature, see \cite{KM}), for any homology sphere $Y$ we have
\begin{equation*}
\HSt_{\bullet}(Y\hash P)=\HSt_{\bullet}(Y)\langle-2\rangle
\end{equation*}
because the Eilenberg-Moore spectral sequence collapses at the $E^2$ page. In particular, its effect is to shift all the correction terms by $-1$. Hence the result follows by considering $Y_k\hash Y_{k'}$ and adding suitably many copies of $P$ and $\bar{P}$.
\end{proof}

The same argument in the proof of Lemma \ref{sum} can be readily applied to connected sums of many homology spheres of simple type $M_k$. While the correction terms will only depend on the two maximal indices, one can see that the $\Rin$-module structure can become arbitrarily complicated as the number of summands grows. For example, this shows that  for each $N$ there exist homology spheres $Y$ such that $\HSt_{\bullet}(Y)$ satisfies the following. There exists sequences $\mathbf{x}_i$, $\mathbf{y}_i$ for $1\leq i\leq N$ not in the image of $i_*$ such that the following hold:
\begin{itemize}
\item $V\cdot \mathbf{x}_i=\mathbf{y}_i$;
\item $Q\cdot \mathbf{x}_1\in \mathrm{Im}(i_*)$;
\item $Q\cdot \mathbf{x}_i=\mathbf{y}_{i-1}$ for $i\geq 2$.
\end{itemize}
As every homology sphere is \textit{invertibly} cobordant to a hyperbolic one (\cite{AKMR}), and by funtoriality invertible cobordisms induce injective maps in Floer homology, we can also choose such examples to be hyperbolic.

\vspace{0.5cm}
\bibliographystyle{alpha}
\bibliography{biblio}

\end{document}